\newtheorem{theorem}{Theorem}[section]
\newtheorem{lemma}[theorem]{Lemma}
\theoremstyle{definition}
\newtheorem{remark}{Remark}[section]
\newtheorem{example}[remark]{Example}
\numberwithin{equation}{section}
\newcommand{\norm}[1]{\left\Vert#1\right\Vert}
\newcommand{\abs}[1]{\left\vert#1\right\vert}
\begin{document}
\title{Two-phase image segmentation by the Allen-Cahn equation and a nonlocal edge detection operator}
\date{}
\author{Zhonghua Qiao \thanks{ Department of Applied Mathematics, The Hong Kong Polytechnic University, Hong Kong,  E-mail: zhonghua.qiao@polyu.edu.hk.}
\and Qian Zhang\thanks{Corresponding author. Department of Applied Mathematics, The Hong Kong Polytechnic University, Hong Kong, E-mail: qian77.zhang@polyu.edu.hk.}
}

\maketitle

\begin{abstract}
Based on a nonlocal Laplacian operator, a novel edge detection method of the grayscale image is proposed in this paper. This operator utilizes the information of neighbor pixels for a given pixel to obtain effective and delicate edge detection. The nonlocal edge detection method is used as an initialization for solving the Allen-Cahn equation to achieve two-phase segmentation of the grayscale image. Efficient exponential time differencing (ETD) solvers are employed in the time integration, and finite difference method is adopted in space discretization. The maximum bound principle and energy stability of the proposed numerical schemes are proved. The capability of our segmentation method has been verified in numerical experiments for different types of grayscale images.
\end{abstract}

\textbf{Key Words}: image segmentation, Allen-Cahn equation, nonlocal edge detection operator, maximum principle, energy stability

\section{Introduction}

Image segmentation is the process of dividing an image domain into some disjoint regions according to a characterization of the image within or in-between the regions \cite{Mitiche2011_SBH}. The characterization can be color, shape, edge, texture, or any feature that can distinguish each region from others. Image segmentation is involved in many fields, such as remote sensing, medical image recognition, robotics, and visual field monitoring.

Among many segmentation methods, variational methods have attracted considerable attention. A typical variational method for image segmentation is based on minimizing an objective energy functional, for instance, the Mumford-Shah (MS) model \cite{Mumford1989_CPAM},
\begin{align}\label{MS_model}
E_{MS}(u,I) = \int_{\Omega\backslash\Gamma} \abs{\nabla u}^2 dx + \mu Length(\Gamma)  + \lambda\int_{\Omega} (u-I)^2 dx,
\end{align}
which is associated with the partition determined by original image $I$ and a union of closed edges $\Gamma$. The function $u$ in the MS model is a piecewise smooth approximation to $I$. $\mu$ and $\lambda$ are positive constants. Under many circumstances, the function $u$ can degenerate to piecewise constant \cite{Chan2001_TIP} and the following model can be achieved:
\begin{align}\label{functional: cv}
E_{CV}(\Gamma,C_1,C_2) = Per(\Gamma; \Omega) +  \lambda_1 \int_{\Gamma} (C_1-I)^2  dx + \lambda_2 \int_{\Omega \backslash \Gamma} (C_2-I)^2  dx,
\end{align}
which is called the two-phase piecewise constant MS model, also known as Chan-Vese model considered previously by Chan and Vese with level set formulation in \cite{Chan2001_TIP,Vese2002_IJCV}. Here, $Per(\Gamma; \Omega)$ denotes the perimeter of the closed curve between two regions $\Gamma$ and $\Omega \backslash \Gamma$, $\lambda_1$ and $\lambda_2$ are positive constants, and $C_1, C_2$ are the average intensities of two phases respectively, which change along with the image intensity. Generally, the objective energy functional contained two parts, including image and partition constraints.  Image constraints yield  functional terms called fitting terms, which measure how close an approximation fits the original image.  Partition constraints give rise to partition priors which usually describe a geometric aspect of the edges, such as their length and smoothness. These two constraints have a wide variety, based on the region \cite{Li2005CVPR, Licm2007, Zhang_2010Pr} or edge \cite{Caselles1995_IEEE,Kass1988_IJCV}.

Different approximations combined with different constraints result in various energy functionals. To solve these models for image segmentation, the level set method, phase-field method, and threshold dynamic method have been successfully used. Wherein, level set methods were proposed by Osher and Sethian in \cite{Osher1988JCP,Osher2004AMR}. The closed curves describing a segmentation region can split or merge flexibly to implement segmentation with complex structures. Subsequently, it has been found that the energy functional can be replaced by another form and derived the variational level set method \cite{Li2005CVPR,Mitiche2011_SBH}, which has advantages on numerical stability. Inspired by the variational level set method, various phase-field models have been derived for image segmentation through the $\Gamma$-convergence \cite{Ambrosio1990_CPAM} for the length term of partition constraints, see e.g., \cite{Bertozzi2016SIAM,Jung2007_siam,Li2011CMA,Huang2019_JSC} and references therein.  Merriman, Bence, and Osher(MBO) \cite{MBO1992} developed a threshold dynamic method for the motion of an interface driven by the mean curvature. As pointed in \cite{Esedoglu2015_CPAM}, the length term of partition constraints can be estimated by the convolution of a heat kernel. As a consequence, a kind of threshold dynamic methods \cite{Esedoglu2006_JCP,Wang_2019, Wang_2017} had been constructed and obtained effective segmentation results.

In this paper, we plan to adopt a phase-field approach of the Chan-Vese model \eqref{functional: cv} for the two-phase segmentation of grayscale images. An alternating minimization method will be used to update the phase variable $u$, the two-phase average intensities $C_1$ and $C_2$ iteratively. Two exponential time differencing (ETD) schemes will be proposed to solve the resulted Allen-Cahn equation for $u$. The discrete maximum bound principle and energy stability will be proved under certain conditions. Notably, there needs an initial value to solve the Allen-Cahn equation. In general, the initial value is determined by taking an threshold value of the original image $I$ \cite{Cai_Zeng,Jung2007_siam,Li2011CMA,Huang2019_JSC}, or selecting a part of the image \cite{Chan2001_TIP, Li2005CVPR, Licm2007, Zhang_2010Pr}. An improper selection of initial values sometimes causes unsatisfactory segmentation results. Therefore, the setup of initial value is also a significant step for image segmentation. In this work, we will develop a nonlocal edge detection method, and use the detected edge as the initial value for solving the Allen-Cahn equation. The nonlocal edge detection method derived by using the nonlocal Laplacian operator, which is comparable and even superior to the existing gradient-based edge detection method \cite{Cherri1989_AO, Castleman1998_PH,Torre1986IEEE}. With the nonlocal edge detection method, more detailed information and structure can be retained in the initial value so that a more delicate image segmentation can be achieved.

The rest of this paper is organized as follows. In Section \ref{sec2}, we first review some widely-used edge detection operators based on the first-order and second-order derivatives, respectively. Then, we introduce a novel nonlocal edge detection method. Section \ref{sec3} is devoted to illustrating an iteration algorithm for the image segmentation by solving an Allen-Cahn type phase-field model, where we use the result of nonlocal edge detection as the initial value to start the iteration process. Plenty of numerical experiments are given in Section \ref{sec:numerical} to demonstrate the effectiveness and efficiency of our proposed methods.  Section \ref{conclusion} concludes the paper.

\section{Nonlocal edge detection method}\label{sec2}
In this section, we first review some mature gradient-based edge detection methods, and then introduce the nonlocal Laplacian operator. Based on the nonlocal Laplacian operator, we develop a novel nonlocal edge detection method.

\subsection{Some gradient-based edge detection operators}\label{sec:review}
 When the image intensity of a grayscale image changes intensely, the corresponding gradient will emerge the local extremum,  and it also associates with the zero-cross point of Laplacian. Thus the local extremum of gradients and the zero-cross point of Laplacian would be two significant measurements for detecting edge.

 Some detection methods are based on the first-order derivative, e.g., Roberts, Prewitt and Sobel operators \cite{Cherri1989_AO}. They use threshold to estimate the local extremum, after obtaining the corresponding approximate derivatives. The difference of aforementioned three methods lies in different weights of neighbor pixels. Though the consideration of neighbor pixels depresses the irrelevant noise, the edge detected by the first-order derivative edge detection methods is wider than expected, which makes the detected edge less accurate. Besides, the choice of the threshold value is by running trials, which causes the nondeterminacy of the detected edge.

  The well-known Canny edge detector \cite{Canny1986_IEEE} is looking for local extremum of the gradient. After a Gaussian filter on the gradient, this method uses two thresholds to detect edges, including weak edges in the output if they are connected to strong edges. By using two thresholds, the Canny method is less likely than the other methods to be disturbed by noise, and more likely to detect true weak edges.

For edge detection methods based on the second-order derivative, the zero-cross point of the Laplacian operator will indicate the edge. Applying the Laplacian operator to the original image directly will cause the double-edge phenomenon. Another deficiency of the Laplacian operator is the sensitivity to the noise. To fix these problems,
Marr proposed the Laplacian of a Gaussian operator as a remedy \cite{Marr1980_PRSL}:
\begin{align*}
L = \nabla^2[G(x,y)*I] = [\nabla^2G(x,y)]*I, \quad G(x,y)=\frac{1}{2\pi \varsigma^2}\exp(-\frac{x^2+y^2}{2\varsigma^2}),
\end{align*}
where $\displaystyle\nabla^2G(x,y) = -\frac{1}{\pi\varsigma^4}[1-\frac{x^2+y^2}{2\varsigma^2}]\exp(-\frac{x^2+y^2}{2\varsigma^2})$. Bigger $\varsigma$ gives us more blur of the image. After the above convolution, the zero-cross point of $L$ indicates the desired edge.

\subsection{Nonlocal Laplacian operator}\label{sec:nonlocal}
The pixel structure of the image provides a natural two-dimensional discretization on a fixed rectangular mesh, therefore we give the elaboration of the 2D nonlocal operator in the subsequent description. For the sake of convenience, the boundary condition is assumed to be the homogeneous Neumann boundary
\begin{align*}
\frac{\partial I}{\partial \mathbf{x}} = 0
\end{align*}
on the boundary layer is defined as
\begin{align*}
\Omega_I = \left\{ \mathbf{x} \not\in \Omega|\ dist(\mathbf{x},\Omega)\leq \delta \right\}.
\end{align*}
Here, $\Omega$ is the area where the original image $I$ belongs to. Without special illustration, we consider the grayscale image $I$ as a 2D matrix whose element denotes the pixel in the corresponding region.  Considering that the coordinate of image is always an integer, the interaction radius  $\delta$ is also taken as an integer.

\subsubsection{General representation}
We consider the following nonlocal Laplacian operator \cite{Qiang2018_IMA,Li2019siam}
\begin{align*}
\mathcal{L}_{\delta}u(\mathbf{x}) = \frac{1}{2}\int_{B_{\delta}(\mathbf{0})}\rho_{\delta}(\norm{\mathbf{s}})(u(\mathbf{x}+\mathbf{s}) - 2u(\mathbf{x}) + u(\mathbf{x}-\mathbf{s}))d\mathbf{s}, \ \mathbf{x}\in\Omega,
\end{align*}
where ${B_{\delta}(\mathbf{0})}$ is the ball with radius $\delta$ in plane $\mathbb{R}^2$ centered at the origin, $\rho_\delta: [0, \delta] \rightarrow  R$ is a nonnegative kernel function, and $\norm{\cdot}$  stands for the usual Euclidean norm. $\rho_{\delta}$ is further assumed to satisfy
\begin{align*}
\int_{B_{\delta}(\mathbf{0})}\norm{\mathbf{s}}^2\rho_{\delta}(\norm{\mathbf{s}})d\mathbf{s} = 2d,
\end{align*}
where dimension $d = 2$.

Given the uniform square mesh as $\mathbf{x_i} = h\mathbf{i}$, $\mathbf{i}\in\mathbb{Z}^2$. At any node $\mathbf{x_i}\in \Omega$, we rewrite the nonlocal Laplacian operator as
\begin{align*}
\mathcal{L}_{\delta}u(\mathbf{x_i}) = \frac{1}{2}\int_{B_{\delta}(\mathbf{0})}\frac{u(\mathbf{x_i+s}) - 2u(\mathbf{x_i}) + u(\mathbf{x_i-s})}{\norm{\mathbf{s}}^2}{\norm{\mathbf{s}}_1} \frac{\norm{\mathbf{s}}^2}{\norm{\mathbf{s}}_1}\rho_{\delta}(\norm{\mathbf{s}})d\mathbf{s}.
\end{align*}
With a quadrature-based finite difference discretization \cite{Qiang2018_IMA}, the nonlocal Laplacian  operator becomes
\begin{align*}
\mathcal{L}_{\delta,h}u(\mathbf{x_i}) = \frac{1}{2}\int_{B_{\delta}(\mathbf{0})}\mathcal{I}_h\left(\frac{u(\mathbf{x_i+s}) - 2u(\mathbf{x_i}) + u(\mathbf{x_i-s})}{\norm{\mathbf{s}}^2}{\norm{\mathbf{s}}_1}\right) \frac{\norm{\mathbf{s}}^2}{\norm{\mathbf{s}}_1}\rho_{\delta}(\norm{\mathbf{s}})d\mathbf{s}.
\end{align*}
The operator $\mathcal{I}_h$ here denotes the piece-wise multi-linear interpolation with respect to $\mathbf{s}$, which can be represented as
\begin{align*}
\mathcal{I}_hv(\mathbf{s}) = \sum\limits_{\mathbf{s}_\mathbf{j}}v(\mathbf{s}_\mathbf{j})\phi_\mathbf{j}(\mathbf{s}).
\end{align*}
Here, the basis function satisfies
\begin{align*}
\phi_\mathbf{j}(\mathbf{s_i}) =  \left\{
\begin{array}{rc}
0  ,   &  \text{if}   \  \mathbf{i} \neq \mathbf{j},\\
1    , &     \text{if} \ \mathbf{i} = \mathbf{j}.
\end{array} \right.
\end{align*}
Hence, the discrete nonlocal Laplacian operator is formulated as follows:
\begin{align*}
\mathcal{L}_{\delta,h}u(\mathbf{x_i}) = \sum\limits_{\mathbf{0} \neq \mathbf{s}_j \in B_{\delta}(\mathbf{0}) } \frac{u(\mathbf{x_i+s_j}) - 2u(\mathbf{x_i}) + u(\mathbf{x_i-s_j})}{\norm{\mathbf{s_j}}^2}\norm{\mathbf{s_j}}_1\beta_{\delta}(\mathbf{s_j}), \ \mathbf{x_i} \in \Omega
\end{align*}
where the coefficient $\beta_{\delta}(\mathbf{s_j})$ is
\begin{align*}
\beta_{\delta}(\mathbf{s_j}) = \frac{1}{2}\int_{B_{\delta}(\mathbf{0})}\phi_\mathbf{j}(\mathbf{s})\frac{\norm{\mathbf{s}}^2}{\norm{\mathbf{s}}_1}\rho_{\delta}(\norm{\mathbf{s}})d\mathbf{s}.
\end{align*}
The operator $\mathcal{L}_{\delta,h}$ is self-adjoint and negative semi-definite.
\subsubsection{Implementation on edge detection}

Let $I_{i,j}$ be the pixel value of the given image at the mesh point $(i,j)\in \mathbb{Z}^2$ and $r = \delta$ is a positive integer. The formulation of the nonlocal Laplacian operator is simplified as
\begin{align*}
L_{\delta,h}I_{i,j} = \sum\limits_{p =0}^{r}\sum\limits_{q =0}^{r}c_{p,q}( I_{i+p,j+q} + I_{i-p,j+q}+ I_{i+p,j-q}+ I_{i-p,j-q} - 4I_{i,j})
\end{align*}
where $c_{0,0} = 0$ and
\begin{align*}
c_{p,q} = \frac{p+q}{(p^2+q^2)h}\int\int_{B_{\delta}^+(0)} \phi_{p,q}(x,y) \rho_{\delta}(\sqrt{x^2 + y^2}) \frac{x^2 + y^2}{x+y}dxdy.
\end{align*}
$\phi_{p,q}$ is the bilinear basis function located at the point $(p,q)$.  The coefficient $c_{p,q}$ is symmetric which can be precomputed to reduce the complexity. In the process of image segmentation, the mesh size $h$ is taken as 1. We summarize our proposed nonlocal Laplacian edge detection method in Algorithm 1 below.
\\

\fbox{
\parbox{0.9\textwidth}{

\textbf{Algorithm 1}

\textbf{Step 1}: Given $\delta$, we obtain the nonlocal Laplacian
\begin{equation*}
L_{\delta,h}I_{i,j} = \sum\limits_{p =0}^{r}\sum\limits_{q =0}^{r}c_{p,q}( I_{i+p,j+q} + I_{i-p,j+q}+ I_{i+p,j-q}+ I_{i-p,j-q} - 4I_{i,j}).
  \end{equation*}

\textbf{Step 2}:  Find the desired edge by choosing a suitable threshold value $\sigma$,
\begin{align*}
e(i,j) = \left\{
\begin{array}{rc}
1   ,   &  \text{if}   \  {L_{\delta,h}I_{i,j} \geq \sigma},\\
0    , &     \text{if} \   {L_{\delta,h}I_{i,j} < \sigma}.
\end{array} \right.
\end{align*}
}}
\\

In Algorithm 1, there are two parameters $\delta, \sigma$ that need to be adjusted. We make the interaction radius $3 \leq \delta\leq 8$, which guarantees that the detection considers the neighbors' information as much as possible, meanwhile does not cause low contrast by too large $\delta$. And the threshold value $\sigma $ is taken slightly larger than zero to determine the zero-cross points approximately and detect the desired edge. In general, these two parameters vary with specific images, we will discuss it in Section \ref{sec:numerical}.

To illustrate the algorithm, a synthetic image profile $I_1 \in [0,1]$ is used as an example. We assume the profile $I_1$ consists of a weak edge, a noise point, a jump edge, and a stair edge
\begin{equation*}
 I_1 = [\underbrace{ 5\ 5 \  4.5\ 4.5\ }_{\text{weak edge}} \underbrace{ 0\ 0\ 0\ 7\ 0\ 0\ 0\ 0\  }_{\text{noise point}}  \underbrace{ 2\ 5\ 2\ 2\ }_{\text{jump edge}} \underbrace{ 0\ 0\ 0\ 0\ 7\ 7\ 7\ 7\ }_{\text{stair edge}} ]/10
\end{equation*}
 correspondingly.

  The nonlocal Laplacian operator is evaluated with $\delta = 8$. In Fig. \ref{fig:comparison_local}, we can see that the nonlocal Laplacian also has zero-cross points around the points where the image intensity changes sharply. $\sigma$ is set to be 0.02 for this example. The intersection of $y = \sigma$ and the nonlocal Laplacian is the desired edge $e$. The nonlocal effect avoids the appearance of zero-cross points at the weak edge and inapparent noise. We can decide whether a weak edge shows or not, by adjusting the threshold value according to the change of the image intensity.
  The threshold method also reduces the complexity of finding the zero-cross points point-wisely. However, it is still sensitive to strong noise points. To get an accurate segmentation of a grayscale image, we will introduce a phase-field model in next section, which takes the detected edge as initial data.

\begin{figure}[t!] \centering
			\includegraphics[width=0.90\columnwidth]{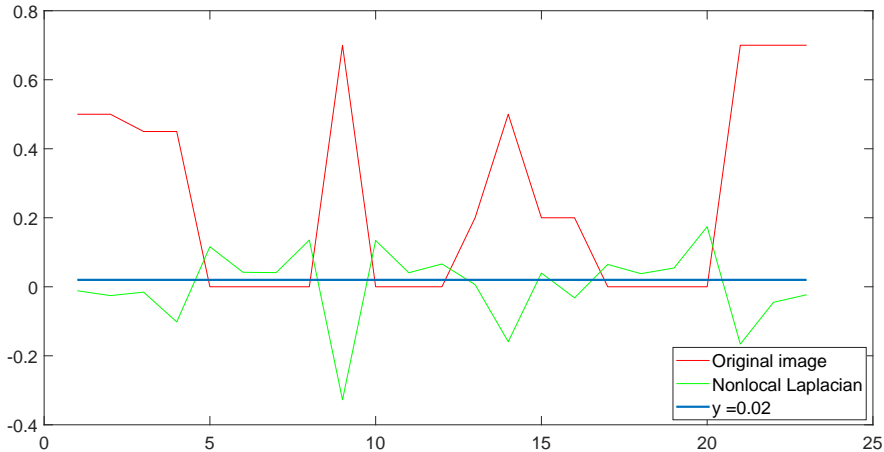}
		\caption{\label{fig:comparison_local} Nonlocal Laplacian edge detector for the profile $I_1$. }
	\end{figure}

\section{Image segmentation by the Allen-Cahn equation}\label{sec3}
The segmentation of a grayscale image from a simple edge detection is inaccurate and the process is very sensitive to noise. In this section, an Allen-Cahn model will be used to segment the given image to two parts with evident intensity change.

\subsection{A phase-field approach to the Chan-Vese model}
A phase-field approach to the Chan-Vese model \eqref{functional: cv} gives the following energy functional:
\begin{align}\label{eqn:AC_model}
E^\epsilon = \int_{\Omega}\left(\epsilon\abs{\nabla u}^2 + \frac{1}{\epsilon}W(u)+F(u)\right) dx,
\end{align}
where $$F(u)= \lambda_1[(C_1-I)^2 H(u-\frac{1}{2})] + \lambda_2[(C_2-I)^2(1-H(u-\frac{1}{2}))].$$
 Here, $W(u) = \sin^2{\pi u}$ is a periodic potential function \cite{Jung2007_siam},  the Heaviside function $H(u)$  is given by
 \begin{align*}
H(u) =  \left\{
\begin{array}{rc}
0  ,   &  \text{for}\ u < 0,\\
1    , &      \text{for}\ u \geq 0,
\end{array} \right.
\end{align*}
and $F(u)$ is the fitting term for the last two terms of \eqref{functional: cv}. The diffusion parameter $\epsilon$ and the weighted coefficients $\lambda_1, \lambda_2$ are positive. The phase variable $u$ eventually evolves to two phases 0 and 1. We assume the interface, i.e. the edge, lies at the contour of $\displaystyle u=\frac{1}{2}$.
\subsection{The numerical scheme}
The energy functional \eqref{eqn:AC_model} depends on the variables $u $ and $C_i, i = 1,2$. We adopt an alternating minimization scheme to minimize the energy functionals $E^\epsilon(u,\mathbf{C})$. Here $\mathbf{C}=(C_1, C_2)$. For the propose of illustration, we write the iteration process $ \mathbf{C}^k \rightarrow u^{k+1}\rightarrow \mathbf{C}^{k+1}$ as
\begin{align}
u^{k+1} = \text{argmin} \ E^\epsilon(\mathbf{C}^k,u^k), \label{eqn:iter1}\\
\mathbf{C}^{k+1} = \text{argmin} \ E^{\epsilon}(u^{k+1},\mathbf{C}^k). \label{eqn:iter2}
\end{align}
We give a regularization of the Heaviside function $H(u)$ by
\begin{align*}
H_{\epsilon_1}(u) = \begin{cases}&  \frac{1}{2\epsilon_1}\left(u + \frac{\epsilon_1}{\pi} \sin(\frac{\pi u}{ \epsilon_1})\right) + \frac{1}{2}, \ \  \text{for}\abs{u} \leq \epsilon_1, \\
& 1,  \ \  \text{for}\ u > \epsilon_1, \\
&0,  \ \  \text{for}\ u < -\epsilon_1.
\end{cases}
\end{align*}
Here, we take $\epsilon_1\leq \frac{1}{2}$. The derivative of $H_{\epsilon_1}(u)$ is an approximation of Dirac function $\displaystyle\delta(u)=\frac{d}{du}H(u)$ given by \cite{Li2005CVPR}
\begin{align*}
\delta_{\epsilon_1}(u)  = \begin{cases}& \frac{1}{2\epsilon_1}\left(1 + \cos(\frac{\pi u}{ \epsilon_1})\right), \ \  \text{for}\abs{u} \leq \epsilon_1, \\
                     &0,  \ \  \text{for}\ \abs{u} > \epsilon_1.
        \end{cases}
\end{align*}

For a fixed $u$, the solution for the equation \eqref{eqn:iter2} can be given by
\begin{align}\label{eqn:c}
C_1 = \frac{\int_{\Omega} H_{\epsilon_1}(u-\frac{1}{2})I dx}{\int_{\Omega} H_{\epsilon_1}(u-\frac{1}{2})dx} , \ \ C_2 = \frac{\int_{\Omega} \left(1-H_{\epsilon_1}(u-\frac{1}{2})\right)I dx}{\int_{\Omega}\left(1- H_{\epsilon_1}(u-\frac{1}{2})\right)dx}.
\end{align}
Therefore our attention should be paid on solving \eqref{eqn:iter1} by finding the steady state solution of the following Allen-Cahn equation:
\begin{align}\label{MAC_1}
u_t = 2\epsilon\Delta u - \frac{1}{\epsilon}w(u) - \left[\lambda_1(C_1-I)^2 - \lambda_2(C_2-I)^2\right]\delta_{\epsilon_1}(u-\frac{1}{2}),
\end{align}
where $w(u) = W'(u) = \pi \sin{2\pi u}$. We consider a homogenous Neumann boundary condition for this Allen-Cahn equation.

We summarize the algorithm described above as\\

\fbox{
\parbox{0.9\textwidth}{

\textbf{Algorithm 2}

\textbf{Step 0}: Give the initial values $u^0$ and ${C}^0_i, i = 1,2$, wherein $u^0$ by the nonlocal edge detection method, and $ C^0_1 = 1.0, C^0_2 = 0.0 $.

\textbf{Step 1}: Set a suitable $\epsilon$.  With  the initial value $u^{0}, {C}^0_i, i = 1,2$, solve the Allen-Cahn equation by the numerical scheme \eqref{ETD1} (or \eqref{ETD2}) till the steady state to get   an initial segmentation  $u^1$ of the original image. Update $C_1^1, C_2^1$ by \eqref{eqn:c} with $u^1$ and set a smaller $\epsilon$.

\textbf{Step 2}: With $u^k, C_1^k, C_2^k$, solve the Allen-Cahn equation by the numerical scheme \eqref{ETD1} (or \eqref{ETD2}) till the steady state to get $u^{k+1}$.

\textbf{Step 3}: Calculate the new ${C}_i^{k+1}, i = 1,2$  by \eqref{eqn:c} with  $u^{k+1}$.

\textbf{Step 4}: Set $u^{k}=u^{k+1}$. Repeat Steps 2-3 until the convergence of $u^{k+1}$.

}}\\

\begin{remark}
As mentioned in Section 2, the nonlocal edge detection is still sensitive to strong noises.
To remove the noise effect, we usually set a larger $\epsilon$ for Step 1 to get an initial segmentation of the original image.  This setup can eliminate the irrelevant broken lines or points detected by the nonlocal edge detection method, which underlines the primary part of the given image. We call this procedure as Stage 1 of the Algorithm 2. In Stage 2, we use a smaller $\epsilon$ in the remaining iterations to get a relatively sharp interface of two phases. As a consequence of Step 4, the boundary of the two parts is specified gradually as the process of iteration goes.
\end{remark}

\subsubsection{Exponential time differencing methods}
In this part, we propose two ETD methods to solve the Allen-Cahn equation \eqref{MAC_1}. Discretizing \eqref{MAC_1} in spatial variables by
the finite difference method leads to a system of ODEs:
\begin{align}\label{ODE}
U_t  + {L}_h U = {N}(U)
\end{align}
where
\begin{align*}
L_h = -2\epsilon D_h  + SI_d, \quad N(U) = SU - \frac{1}{\epsilon}w(U) - \left[\lambda_1(C_1-I)^2 - \lambda_2(C_2-I)^2\right]\delta_{\epsilon_1}(U-\frac{1}{2})
\end{align*}
with discrete Laplacian matrix $D_h$, which comes from the central finite different difference discretization of $\Delta$ with the homogeneous Neumann boundary condition, and identity matrix $I_d$. The positive constant $S$ is a stabilizer. Multiply \eqref{ODE} an integrating factor $e^{L_ht}$, and integrate from $t^n$ to $t^{n+1}$ gives
\begin{align}\label{eqn:ETD_integral}
U(t^{n+1}) = e^{-L_h\Delta t}U(t^n) + e^{-L_h\Delta t}\int_0^{\Delta t}e^{L_hs}N(U(t^n+s))ds.
\end{align}
Then we propose two approximations of \eqref{eqn:ETD_integral} as follows.

\begin{itemize}
\item ETD1 : Approximating $N(U(t^n+s))$ with  $N(U(t^n))$, we obtain
\begin{align}\label{ETD1}
U^{n+1} = \phi_0(L_h\Delta t) U^n +\Delta t \phi_1(L_h\Delta t) N(U^n).
\end{align}
\item ETDRK2: Evaluating $\hat{U}^{n+1}$ by ETD1 and approximating $N(U(t^n+s))$ with $\displaystyle(1-\frac{s}{\Delta t})N(U(t^n)) + \frac{s}{\Delta t}N(\hat{U}^{n+1})$, we obtain
    \begin{equation}\left\{
    \begin{split}
    &\hat{U}^{n+1} = \phi_0(L_h\Delta t) U^n +\Delta t \phi_1(L_h\Delta t) N(U^n),\\
    &{U}^{n+1} = \phi_0(L_h\Delta t) \hat{U}^{n+1} +\Delta t \phi_2(L_h\Delta t)( N(\hat{U}^{n+1})- N(U^n)). \label{ETD2}\\
         \end{split}\right.
     \end{equation}
\end{itemize}
$\phi_i,i = 0,1,2$ are defined as
\begin{align*}
\phi_0(a) = e^{-a}, \quad  \phi_1(a) = \frac{1-e^{-a}}{a}, \quad \phi_2 = \frac{e^{-a}-1+a}{a^2}.
\end{align*}

The key process of calculating $U^{n+1}$ from the scheme (\ref{ETD1})
or (\ref{ETD2}) is the efficient implementation of the actions of the exponentials $\phi_i(L_h\Delta t), i = 0,1,2$.
Since $D_h$ comes from the finite difference discretization of $\Delta$  with the homogeneous Neumann boundary condition, we could employ the 2D discrete cosine transform (DCT) in the calculation. We use a square domain to illustrate the implementation, where we have $N\times N$ pixels. We introduce $\mathcal{D}$ as the $2D$ DCT operator, and then for any $V = (V_{k,l})\in \mathbb{R}^{N\times N} $,
\begin{align*}
L_h = \mathcal{D}\hat{L}_h\mathcal{D}^{-1},
\end{align*}
where $$(\hat{L}_h V)_{k,l} = \lambda_{k,l}{V}_{k,l},\quad 1\leq k, l \leq N.$$
Here
\begin{align*}
\lambda_{k,l} = {8\epsilon}\sin^2{\frac{k\pi}{N}} + {8\epsilon}\sin^2{\frac{l\pi}{N}}+ S, \quad  1\leq k,l\leq N
\end{align*}
are the eigenvalues of $L_h$. Then we have \cite{Higham2008SIAM}
\begin{align*}
\phi_i(L_h\Delta t) =  \mathcal{D}\phi_i(\hat{L}_h\Delta t)\mathcal{D}^{-1}, \ (\phi_i(\hat{L}_h\Delta t)V)_{k,l} = \phi_i(\lambda_{k,l}\Delta t)V_{k,l}, \ i =0,1,2.
 \end{align*}
 The actions of $\mathcal{D}, \mathcal{D}^{-1}$ can be implemented by 2D DCT and its inverse transform, respectively.  The computational complexity is $\mathcal{O}(N^2\log N)$ per time step.

\subsubsection{Discrete maximum bound principle}
The solutions of the Allen-Cahn equation (\ref{MAC_1}) satisfy the maximum bound principle \cite{Li2021sirev}. Next, we will discuss the requirement for the stabilizer $S$ such that the solutions of ETD1 \eqref{ETD1} and ETDRK2 \eqref{ETD2} preserve the discrete maximum bound principle.

The initial value $u^0$ lies in the interval $[0,1]$. Taking the projection $\mathcal{P}: u \in [0,1] \rightarrow \tilde{u}\in [-1,1]$, we have
\begin{equation}\label{eqn:transformation}
\begin{split}
&\tilde{u} = \mathcal{P}(u) = 2(u - \frac{1}{2}), \ u = \mathcal{P}^{-1}(\tilde{u}) =  \frac{1}{2}\tilde{u} + \frac{1}{2}.
\end{split}
\end{equation}
Since we rescale the phase variable $u$, the original image $I$ and average intensities $C_1,C_2$ need to be changed accordingly. Using the same transformation \eqref{eqn:transformation}, we get the new $\tilde{I}$ lying in $ [-1,1]$ and the new average intensities $\tilde{C}_1, \tilde{C}_2$.
Substituting $\tilde{u}, \tilde{C}_1, \tilde{C}_2$ into the Allen-Cahn equation \eqref{MAC_1}, we have
\begin{align}\label{eqn:transformed}
\tilde{u}_t - 2\epsilon\Delta \tilde{u} + \frac{2}{\epsilon}\tilde{w}(\tilde{u}) + 2\left[\lambda_1(\tilde{C}_1-\tilde{I})^2 - \lambda_2(\tilde{C}_2-\tilde{I})^2\right]\delta_{\epsilon_1}(\frac{1}{2}\tilde{u})=0,
\end{align}
where $\tilde{w}(\tilde{u}) = \pi\sin{\pi(\tilde{u}+1)}$.   The ODE system obtained by the finite difference spatial discretization for equation \eqref{eqn:transformed} becomes
\begin{align}\label{ODE_trans}
\tilde{U}_t  + \tilde{L}_h \tilde{U} = \tilde{{N}}(\tilde{U}), 
\end{align}
with
\begin{align*}
 \tilde{L}_h = -2\epsilon D_h + \tilde{S}I_d,\ \ \tilde{N}(\tilde{U}) = \tilde{S}\tilde{U} - \frac{2}{\epsilon}\tilde{w}(\tilde{U}) - 2\left[\lambda_1(\tilde{C}_1-\tilde{I})^2 - \lambda_2(\tilde{C}_2-\tilde{I})^2\right]\delta_{\epsilon_1}(\frac{1}{2}\tilde{U}).
 \end{align*}

Subsequently, we prove  the solutions of ETD1 \eqref{ETD1} and ETDRK2 \eqref{ETD2} for the transformed equation \eqref{eqn:transformed} preserve the discrete maximum bound principle.
\begin{lemma}\label{lemma:bound}
The nonlinear term $\tilde{N}$ is bounded by $\tilde{S}$, i.e. $\norm{\tilde{N}(\cdot)}_{\infty} \leq \tilde{S}$, provided that
 \begin{align*}
 \tilde{S}\geq \tilde{L} \triangleq \frac{2\pi^2}{\epsilon} + \frac{2\lambda\pi}{\epsilon_1^2}, \quad \lambda \triangleq \max(\lambda_1,\lambda_2).
\end{align*}
\end{lemma}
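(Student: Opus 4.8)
The plan is to bound $\|\tilde N(\tilde U)\|_\infty$ pointwise, i.e. to show that for any real vector $\tilde U$ with $\|\tilde U\|_\infty \le 1$ (the maximum-bound invariant we want to propagate), the three contributions to $\tilde N(\tilde U)$ — the stabilizer term $\tilde S\tilde U$, the potential term $-\frac{2}{\epsilon}\tilde w(\tilde U)$, and the fitting term involving $\delta_{\epsilon_1}$ — each stay under control, and that their combination is dominated by $\tilde S$. Since $\tilde N$ acts componentwise, it suffices to estimate the scalar function $g(x) = \tilde S x - \frac{2}{\epsilon}\tilde w(x) - 2\big[\lambda_1(\tilde C_1-\tilde I)^2 - \lambda_2(\tilde C_2-\tilde I)^2\big]\delta_{\epsilon_1}(\tfrac{1}{2}x)$ for $x\in[-1,1]$ (with the coefficient depending on the pixel, but uniformly bounded).

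First I would record the elementary bounds on each piece. The stabilizer part gives $|\tilde S x| \le \tilde S$ on $[-1,1]$. For the potential, $\tilde w(x) = \pi\sin\pi(x+1)$, so $|\tilde w(x)| \le \pi$ and hence $\big|\frac{2}{\epsilon}\tilde w(x)\big| \le \frac{2\pi}{\epsilon}$; this is what contributes the $\frac{2\pi^2}{\epsilon}$-type term once sharpened. Actually a cleaner route is to use the Lipschitz/derivative structure: the standard maximum-principle argument for ETD schemes requires $\|\tilde N\|_\infty \le \tilde S$, and one shows this by noting $g(x) = \tilde S x - f(x)$ where $f$ collects the nonlinear parts, $f$ is continuously differentiable, and $\|f'\|_\infty \le \frac{2\pi^2}{\epsilon} + \frac{2\lambda\pi}{\epsilon_1^2}$; combined with $f(0)$ being small this forces $|f(x)| \le \tilde L$ on $[-1,1]$, whence $|g(x)| \le \tilde S$ provided $\tilde S \ge \tilde L$. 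For the derivative estimate: $\big(\frac{2}{\epsilon}\tilde w\big)' = \frac{2\pi}{\epsilon}\cdot \pi\cos\pi(x+1)$, bounded by $\frac{2\pi^2}{\epsilon}$; and for the fitting term, $\delta_{\epsilon_1}(\tfrac12 x)$ has derivative $\tfrac12\delta_{\epsilon_1}'(\tfrac12 x)$ with $\delta_{\epsilon_1}'(y) = -\frac{\pi}{2\epsilon_1^2}\sin(\tfrac{\pi y}{\epsilon_1})$ on $|y|\le\epsilon_1$, so $|\delta_{\epsilon_1}'| \le \frac{\pi}{2\epsilon_1^2}$, and with the factor $2$ and the prefactor $\lambda_1(\tilde C_1-\tilde I)^2-\lambda_2(\tilde C_2-\tilde I)^2$ bounded in absolute value by $\lambda\cdot(\text{something}\le 4)$ — here I must be a little careful about the range of $\tilde I, \tilde C_i$.

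The step I expect to be the main obstacle is exactly that bookkeeping: controlling the coefficient $\big|\lambda_1(\tilde C_1-\tilde I)^2 - \lambda_2(\tilde C_2-\tilde I)^2\big|$. Since $\tilde I,\tilde C_1,\tilde C_2 \in [-1,1]$, each squared difference is at most $4$, which would naively give a factor $4\lambda$ rather than the $\lambda$ appearing in $\tilde L$; to land on the stated constant one presumably uses that $\delta_{\epsilon_1}$ is supported on $|\tfrac12 x|\le\epsilon_1$ together with a sharper bound such as $|\tilde C_i - \tilde I| \le 1$ coming from $\tilde C_i$ being an average of values $\tilde I$ lies among, or one simply absorbs the discrepancy into how $\lambda$ is defined. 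I would check the paper's normalization conventions here and, if needed, note that the squared-difference factor is bounded by $1$ (not $4$) because the rescaled intensities and the rescaled averages both lie in $[0,1]$ after an appropriate shift, or invoke $\delta_{\epsilon_1}(\tfrac12 x)\le \frac{1}{\epsilon_1}$ to trade the derivative bound for a direct bound. Modulo pinning down that one constant, the rest is a routine triangle-inequality assembly: $|\tilde N(\tilde U)|_{\text{componentwise}} \le \tilde L \le \tilde S$.
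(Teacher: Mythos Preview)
Your triangle-inequality strategy has a genuine gap and cannot reach the stated bound. Writing $\tilde N(\xi)=\tilde S\xi - f(\xi)$ with $f$ collecting the nonlinear pieces, you estimate $|f'|\le \tilde L$ and then want to conclude $|\tilde N(\xi)|\le \tilde S$. But a crude triangle inequality only gives
\[
|\tilde N(\xi)|\le \tilde S|\xi|+|f(\xi)|\le \tilde S+\tilde L,
\]
which is too large by exactly $\tilde L$. Your remark that ``$f(0)$ is small'' is also not correct: $\tilde w(0)=0$, but $\delta_{\epsilon_1}(0)=1/\epsilon_1$, so $f(0)$ carries the full fitting coefficient and need not be small. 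Likewise the final line ``$|\tilde N|\le \tilde L$'' forgets the $\tilde S\xi$ contribution entirely.

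What makes the paper's argument work is a \emph{monotonicity plus endpoint} observation that you do not invoke. First, the derivative bound $|f'|\le \tilde L$ is used with a sign, not just in absolute value: it gives $\tilde N'(\xi)=\tilde S-f'(\xi)\ge \tilde S-\tilde L\ge 0$, so $\tilde N$ is nondecreasing on $[-1,1]$. Second, at the endpoints both nonlinear pieces vanish exactly: $\tilde w(\pm1)=\pi\sin(\pi(\pm1+1))=0$, and since $\epsilon_1\le \tfrac12$ one has $\delta_{\epsilon_1}(\pm\tfrac12)=0$. Hence $\tilde N(-1)=-\tilde S$ and $\tilde N(1)=\tilde S$, and monotonicity forces $-\tilde S\le \tilde N(\xi)\le \tilde S$ on $[-1,1]$. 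This cancellation at the endpoints is the step your sketch misses; without it the bound simply cannot be $\tilde S$.

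Regarding your bookkeeping worry about $4\lambda$ versus $\lambda$: the paper indeed uses $|\lambda_1(\tilde C_1-\tilde I)^2-\lambda_2(\tilde C_2-\tilde I)^2|\le 4\lambda$. The extra factor $4$ is absorbed by the chain-rule factor $\tfrac12$ from $\delta_{\epsilon_1}(\tfrac12\xi)$ and by $|\delta_{\epsilon_1}'|\le \pi/(2\epsilon_1^2)$, yielding $4\lambda\cdot\tfrac12\cdot \tfrac{\pi}{2\epsilon_1^2}\cdot 2=\tfrac{2\lambda\pi}{\epsilon_1^2}$ (the outer factor $2$ coming from the coefficient in front of the fitting term in $\tilde N$). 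So no sharper estimate on $|\tilde C_i-\tilde I|$ is needed; your concern there was a red herring.
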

\begin{proof}
The derivative of $\tilde{N(\xi)}$ is given by
\begin{align}\label{proof_b1}
\tilde{N}'(\xi) = \tilde{S} - \frac{2}{\epsilon}\tilde{w}'(\xi) - \left[\lambda_1(\tilde{C}_1-\tilde{I})^2 - \lambda_2(\tilde{C}_2-\tilde{I})^2\right]\delta'_{\epsilon_1}(\frac{1}{2}\xi).
\end{align}
For the second term of (\ref{proof_b1}), $-\frac{2}{\epsilon}\tilde{w}'(\xi) = -\frac{2\pi^2}{\epsilon}\cos{\pi(\xi+1)}$ is bounded is $\frac{2\pi^2}{\epsilon}$.
As for the third term, the coefficient of fitting term $\left[\lambda_1(\tilde{C}_1-\tilde{I})^2 - \lambda_2(\tilde{C}_2-\tilde{I})^2\right]$ can be estimated by
\begin{align*}
-4\lambda_2 \leq \left[\lambda_1(\tilde{C}_1-\tilde{I})^2 - \lambda_2(\tilde{C}_2-\tilde{I})^2\right] \leq 4\lambda_1
\end{align*}
i.e.
\begin{align*}
\abs {\lambda_1(\tilde{C}_1-\tilde{I})^2 - \lambda_2(\tilde{C}_2-\tilde{I})^2} \leq 4\max(\lambda_1,\lambda_2) \triangleq 4\lambda,
\end{align*}
since $\tilde{I}$ represents the rescaling image whose elements are in $[-1,1]$, and the average intensities $\tilde{C}_1, \tilde{C}_2$ are also bounded by $-1,1$. And we know $\delta'_{\epsilon_1}(\xi)=-\frac{\pi}{2\epsilon_1^2}\sin{\frac{\pi\xi}{\epsilon_1}}$.
Hence, the third term of $\tilde{N}'(\xi)$ is bounded by $\frac{2\lambda\pi}{\epsilon_1^2}$.
Let $$\tilde{L} \triangleq \frac{2\pi^2}{\epsilon} + \frac{2\lambda\pi}{\epsilon_1^2},$$ and then we have
\begin{align*}
 \tilde{N}'(\xi) \geq \tilde{S} - \tilde{L} \geq 0,
\end{align*}
when the stabilizer $\tilde{S}$ satisfies $\tilde{S} \geq \tilde{L}$.
On the other hand, the nonlinear term $\tilde{N}(\xi)$  also has property
\begin{align*}
\tilde{N}(-1) = -\tilde{S}, \quad \tilde{N}(1) = \tilde{S}.
\end{align*}
The monotonicity leads us to the conclusion,
\begin{align*}
\abs{\tilde{N}(\xi)} \leq \tilde{S},  \quad \xi\in[-1,1]
\end{align*}
under the condition $\tilde{S} \geq \tilde{L}$.
\end{proof}

With the bound of the nonlinear term $\tilde{N}$ in equation \eqref{ODE_trans}, the discrete maximum bound principle for the ETD1 \eqref{ETD1} and ETDRK2 \eqref{ETD2} can be derived by Theorem 3.4 and Theorem 3.5 in \cite{Li2019siam} when the stabilizer $\tilde{S} \geq \tilde{L} $. That is, the numerical solution $\tilde{U}^n$ satisfies $\norm{\tilde{U}^n}_{\infty} \leq 1 $ when the initial value $\tilde{u}^0$ is bounded by $-1$ and $1$. $\tilde{U}$ and $U$ also satisfy the relations in \eqref{eqn:transformation}, so numerical solutions of ETD1 \eqref{ETD1} and ETDRK2 \eqref{ETD2} for equation \eqref{ODE} preserve the discrete maximum bound principle, which is given by the following theorem.

\begin{theorem}\label{thm1}
 If the initial value satisfies $0\leq u^0\leq 1$, then numerical solutions $U^n$ of ETD1 \eqref{ETD1} and ETDRK2 \eqref{ETD2} for equation \eqref{ODE} is also in the interval $[0,1]$, for all $n>0$, provided that the stabilizer $S$ satisfies
\begin{align}\label{bound_L}
S\geq L,\ \ L = \tilde{L}.
\end{align}
\end{theorem}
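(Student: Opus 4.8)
The plan is to treat Theorem~\ref{thm1} as a corollary of Lemma~\ref{lemma:bound} together with the abstract discrete maximum-bound-principle results for exponential time differencing schemes in \cite{Li2019siam}. First I would pass to the shifted variable $\tilde{U}=\mathcal{P}(U)$ through the affine map \eqref{eqn:transformation}, which sends the target interval $[0,1]$ onto $[-1,1]$ and turns \eqref{ODE} into \eqref{ODE_trans} with $\tilde{L}_h=-2\epsilon D_h+\tilde{S}I_d$ and nonlinearity $\tilde{N}$. The crucial observation is that, choosing $\tilde{S}=S$ (so that $\tilde{L}_h=L_h$), applying $\mathcal{P}$ to an ETD1 or ETDRK2 iterate of \eqref{ODE} produces exactly the corresponding iterate of \eqref{ODE_trans}: the constant shift $\tfrac12\mathbf{1}$ is annihilated by $D_h$, satisfies $L_h\mathbf{1}=S\mathbf{1}$, and is reproduced under one step precisely because of the partition identity $\phi_0(a)+a\phi_1(a)=1$ (and its two-stage analogue for ETDRK2). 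Hence it suffices to show $\norm{\tilde{U}^n}_\infty\le 1$ for all $n$, given $\norm{\tilde{U}^0}_\infty\le 1$.

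For the rescaled system the substantive estimate is already in hand: Lemma~\ref{lemma:bound} gives $\norm{\tilde{N}(\cdot)}_\infty\le\tilde{S}$ as soon as $\tilde{S}\ge\tilde{L}$. What remains is to confirm that $\tilde{L}_h$ meets the structural hypotheses of Theorems~3.4 and 3.5 of \cite{Li2019siam}. Since $D_h$ is the central-difference Neumann Laplacian, it has nonnegative off-diagonal entries and annihilates constants, so $-\tilde{L}_h$ is a Metzler matrix with $\tilde{L}_h\mathbf{1}=\tilde{S}\mathbf{1}$; consequently $e^{-\tilde{L}_h t}$ is entrywise nonnegative with $e^{-\tilde{L}_h t}\mathbf{1}=e^{-\tilde{S}t}\mathbf{1}$, and likewise $\phi_1(\tilde{L}_h\Delta t)=\int_0^1 e^{-\theta\tilde{L}_h\Delta t}\,d\theta$ and $\phi_2(\tilde{L}_h\Delta t)=\int_0^1(1-\theta)e^{-\theta\tilde{L}_h\Delta t}\,d\theta$ are entrywise nonnegative with row sums $\phi_1(\tilde{S}\Delta t)$ and $\phi_2(\tilde{S}\Delta t)$. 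These are exactly the properties under which \cite{Li2019siam} proves the discrete maximum bound principle for \eqref{ETD1} and \eqref{ETD2}; invoking those theorems — or, for ETD1, the one-line estimate $\norm{\tilde{U}^{n+1}}_\infty\le e^{-\tilde{S}\Delta t}+\Delta t\,\phi_1(\tilde{S}\Delta t)\,\tilde{S}=1$ and an induction on $n$ — yields $\norm{\tilde{U}^n}_\infty\le1$ for all $n$.

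Finally I would undo the rescaling: $U^n=\mathcal{P}^{-1}(\tilde{U}^n)=\tfrac12\tilde{U}^n+\tfrac12\in[0,1]$, and because the only requirement invoked was $S=\tilde{S}\ge\tilde{L}$ we may put $L=\tilde{L}$, which is \eqref{bound_L}. I do not expect a serious obstacle here — the quantitative heart of the matter is Lemma~\ref{lemma:bound}, which is already proved. The two points needing genuine care are (i) verifying that the ETD updates are equivariant under the affine change of variables, with the matching stabilizer $\tilde{S}=S$, so that the problem truly reduces to the $[-1,1]$ version; and (ii) checking that $\tilde{L}_h$ satisfies the nonnegativity and row-sum hypotheses of the cited maximum-principle theorems. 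Both are routine consequences of $D_h$ being the standard Neumann discrete Laplacian and of the definitions of $\phi_0,\phi_1,\phi_2$.
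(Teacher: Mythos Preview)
Your proposal is correct and follows essentially the same route as the paper: pass to the shifted variable via \eqref{eqn:transformation}, invoke Lemma~\ref{lemma:bound} to bound $\tilde{N}$, apply Theorems~3.4 and~3.5 of \cite{Li2019siam} to obtain $\norm{\tilde{U}^n}_\infty\le 1$, and then undo the rescaling. The paper's argument is terser---it does not spell out the equivariance of the ETD updates under $\mathcal{P}$ or the Metzler/row-sum verification for $\tilde{L}_h$---so your version simply fills in details that the paper leaves implicit.
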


Theorem \ref{thm1} demonstrates that for fixed $C_1, C_2$, our resolution for $u^k$ preserves the discrete maximum bound principle. It shows that the initial value of the next step in the iteration still lies in $[0,1]$.  After updating $C_1, C_2$ in Step 3 of Algorithm 2, the interval, which $C_1, C_2$ belong to, is invariant. It guarantees that the discrete maximum bound principle always holds for the whole iterative process.

\subsubsection{Discrete energy stability}
The Allen-Cahn equation (\ref{MAC_1}) holds the energy stability $$\frac{dE^{\epsilon}}{dt}\leq 0.$$ In this section, we show that the proposed ETD schemes could preserve this property in the discrete sense. For a rectangular image with $M\times N$ pixels, the solution of \eqref{ODE}, $U\in R^{MN}$. The discretized energy is defined as
$E_{h}^{\epsilon}$ defined by
$$E_h^{\epsilon}=\sum_{i=1}^{MN}(\frac{1}{\epsilon}W(U)+F_{\epsilon_1}(U))-{\epsilon}U^TD_hU, \quad \forall U\in R^{MN}.$$
Here, $F_{\epsilon_1}(U)=\lambda_1[(C_1-I)^2 H_{\epsilon_1}(U-\frac{1}{2})] + \lambda_2[(C_2-I)^2(1-H_{\epsilon_1}(U-\frac{1}{2}))].$

\begin{lemma}\label{lemma_1}
For the fixed constants $C_1,C_2$, the energy stability of the ETD schemes is given by
\begin{align*}
&\text{ ETD1 \eqref{ETD1}}: \quad E^{\epsilon}(U^{n+1})\leq E^{\epsilon}(U^n), \\
&\text{ ETDRK2 \eqref{ETD2}}: \quad E^{\epsilon}(U^{n+1})\leq E^{\epsilon}(U^n) + C(1+\Delta t)^2
\end{align*}
under the condition ${S}\geq \frac{L}{2}$. The constant $C$ is independent of $\Delta t$ and $h$.
\end{lemma}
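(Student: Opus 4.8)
The plan is to exploit the fact that the semidiscrete system \eqref{ODE} is a \emph{stabilized gradient flow} for $E_h^{\epsilon}$. Putting $\Phi(U)=\sum_{i=1}^{MN}\bigl(\tfrac1\epsilon W(U_i)+F_{\epsilon_1}(U_i)\bigr)$, one checks at once that $N(U)=SU-\nabla\Phi(U)$, hence $L_hU-N(U)=-2\epsilon D_hU+\nabla\Phi(U)=\nabla E_h^{\epsilon}(U)$; using $\phi_0(a)=1-a\phi_1(a)$, the ETD1 update \eqref{ETD1} rewrites as
\begin{align*}
U^{n+1}-U^n=-\Delta t\,\phi_1(L_h\Delta t)\,\nabla E_h^{\epsilon}(U^n).
\end{align*}
Bounding $W''$ and $F_{\epsilon_1}''=\bigl[\lambda_1(C_1-I)^2-\lambda_2(C_2-I)^2\bigr]\delta_{\epsilon_1}'(\cdot-\tfrac12)$ exactly as in the proof of Lemma \ref{lemma:bound} gives $\norm{\Phi''(\cdot)}_{\infty}\le L$, while $-2\epsilon D_h$ is symmetric positive semidefinite; so, in the Loewner order, $\nabla^2E_h^{\epsilon}(\xi)=\Phi''(\xi)-2\epsilon D_h\preceq L_h+(L-S)I_d$ for every $\xi$. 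Finally $L_h$ and each $\phi_i(L_h\Delta t)$ are simultaneously diagonalized by the DCT, with eigenvalues $\lambda\in[S,\,16\epsilon+S]$ of $L_h$ and $\phi_i(\lambda\Delta t)$, and $0<\phi_1\le1$, $0\le\phi_2\le\tfrac12$ on $(0,\infty)$.

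\textbf{ETD1.} I would write the exact second-order Taylor expansion of $E_h^{\epsilon}$ along the segment from $U^n$ to $U^{n+1}$, insert the Hessian bound above, and substitute the rewritten update with $g:=\nabla E_h^{\epsilon}(U^n)$, obtaining
\begin{align*}
E_h^{\epsilon}(U^{n+1})-E_h^{\epsilon}(U^n)\le-\Delta t\,g^{T}\phi_1(L_h\Delta t)\Bigl[I_d-\tfrac12\Delta t\,L_h\phi_1(L_h\Delta t)-\tfrac12(L-S)\Delta t\,\phi_1(L_h\Delta t)\Bigr]g.
\end{align*}
By the simultaneous diagonalization it suffices to show, for every eigenvalue $a:=\lambda\Delta t>0$, that $1-\tfrac12a\phi_1(a)-\tfrac12(L-S)\Delta t\,\phi_1(a)\ge0$. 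Since $a\phi_1(a)=1-e^{-a}\in(0,1)$ and $\phi_1(a)=\tfrac{1-e^{-a}}{a}<\tfrac1a\le\tfrac1{S\Delta t}$ (recall $\lambda\ge S$), the left-hand side is at least $\tfrac12-\tfrac{L-S}{2S}=\tfrac{2S-L}{2S}\ge0$ whenever $S\ge L/2$ (the case $L\le S$ being trivial). As $\phi_1(L_h\Delta t)$ is positive definite this yields $E_h^{\epsilon}(U^{n+1})\le E_h^{\epsilon}(U^n)$, with \emph{no} restriction on $\Delta t$.

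\textbf{ETDRK2.} The predictor $\hat U^{n+1}$ in \eqref{ETD2} is exactly an ETD1 step, so $E_h^{\epsilon}(\hat U^{n+1})\le E_h^{\epsilon}(U^n)$ by the previous paragraph, and it remains to control $E_h^{\epsilon}(U^{n+1})-E_h^{\epsilon}(\hat U^{n+1})$. Here the discrete maximum bound principle (Theorem \ref{thm1}) enters: it guarantees $U^n,\hat U^{n+1}\in[0,1]^{MN}$, hence $\norm{N(U^n)}_{\infty},\norm{N(\hat U^{n+1})}_{\infty}\le C_1$ and $\norm{\nabla E_h^{\epsilon}(\hat U^{n+1})}_2\le C_0\sqrt{MN}$ with $C_0:=\norm{L_h}_2+C_1$, $C_0,C_1$ depending only on $S,\epsilon,\lambda,\epsilon_1$. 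From \eqref{ETD2}, $U^{n+1}-\hat U^{n+1}=\Delta t\,\phi_2(L_h\Delta t)\bigl(N(\hat U^{n+1})-N(U^n)\bigr)$, so $\norm{U^{n+1}-\hat U^{n+1}}_2\le C_1\Delta t\sqrt{MN}$; combining this with $\norm{\nabla^2E_h^{\epsilon}}_2\le L+16\epsilon$ in a second-order Taylor expansion of $E_h^{\epsilon}$ at $\hat U^{n+1}$ gives
\begin{align*}
E_h^{\epsilon}(U^{n+1})-E_h^{\epsilon}(\hat U^{n+1})\le C_0C_1\,MN\,\Delta t+\tfrac12(L+16\epsilon)C_1^2\,MN\,\Delta t^2\le C(1+\Delta t)^2,
\end{align*}
with $C$ depending on $MN,S,\epsilon,\lambda,\epsilon_1$ but independent of $\Delta t$ and $h$. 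Adding the two estimates proves the claim; in fact one gets the slightly sharper bound $E_h^{\epsilon}(U^{n+1})\le E_h^{\epsilon}(U^n)+C\Delta t(1+\Delta t)$.

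\textbf{Expected main obstacle.} Once the gradient-flow rewriting is in place, the ETD1 part reduces to the scalar inequality above and is routine. The delicate point is the ETDRK2 estimate, where one must verify that \emph{every} constant entering $C$ is genuinely uniform in $\Delta t$ (and in $h$): this is precisely why the argument must invoke (i) the discrete maximum bound principle of Theorem \ref{thm1}, to keep $N$ and $\nabla E_h^{\epsilon}$ bounded along the trajectory, and (ii) the boundedness of $L_h$ (here $\norm{L_h}_2\le16\epsilon+S$ because $h=1$), so that no negative power of $h$ leaks into $C$. A secondary subtlety is that the ETDRK2 bound is only energy boundedness up to the additive constant $C(1+\Delta t)^2$, not monotone decay, so it should be read over a finite time horizon.
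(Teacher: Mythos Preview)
Your argument is essentially the one the paper has in mind: the paper does not give a self-contained proof but defers to Theorems~5.1 and~5.2 of \cite{Li2019siam}, noting only that the key ingredient is the bound $\norm{\Phi''}_\infty\le L$ (which you also derive, via the estimates in Lemma~\ref{lemma:bound}). Your ETD1 computation---rewriting \eqref{ETD1} as a preconditioned gradient step and reducing to the scalar inequality $1-\tfrac12 a\phi_1(a)-\tfrac12(L-S)\Delta t\,\phi_1(a)\ge0$---is exactly the mechanism behind Theorem~5.1 in that reference, and your verification under $S\ge L/2$ is correct.

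One point to flag for the ETDRK2 part: you invoke Theorem~\ref{thm1} to keep $U^n$ and $\hat U^{n+1}$ in $[0,1]^{MN}$, but Theorem~\ref{thm1} requires $S\ge L$, strictly stronger than the $S\ge L/2$ assumed in Lemma~\ref{lemma_1}. The proof in \cite{Li2019siam} for the ETDRK2 energy estimate likewise relies on the discrete maximum bound principle, so this tension is inherited from the reference rather than introduced by you; still, as written your argument only establishes the ETDRK2 bound under $S\ge L$. A second minor point: your constant $C$ carries the factors $MN$ and $\norm{L_h}_2\le 16\epsilon+S$; the claimed $h$-independence therefore hinges on the convention $h=1$ used throughout the paper for images, which you correctly note. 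Finally, your formula $U^{n+1}-\hat U^{n+1}=\Delta t\,\phi_2(L_h\Delta t)(N(\hat U^{n+1})-N(U^n))$ corresponds to the standard Cox--Matthews ETDRK2 corrector $U^{n+1}=\hat U^{n+1}+\Delta t\,\phi_2(\cdots)$; the paper's display \eqref{ETD2} has $\phi_0(L_h\Delta t)\hat U^{n+1}$ in the corrector, which appears to be a typo (it would not even be exact on the linear part), so your reading is the intended one.
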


\begin{remark}
Referring to the proofs of Theorem 5.1 and Theorems 5.2 in \cite{Li2019siam}, we can prove the discrete energy stability for the ETD1 and EDTRK2 schemes in a similar way. The essence of this proof lies in the estimate of  $\|F''_{\epsilon_1}\|_\infty$. From a similar argument for the bounds of the second term and third term of \eqref{proof_b1}, we can have $\|F''_{\epsilon_1}\|_\infty=L$.
\end{remark}

\begin{theorem}\label{thm2}
The iterative process \eqref{eqn:iter1}-\eqref{eqn:iter2} holds unconditional energy stability under the condition ${S}\geq \frac{L}{2}$,
\begin{align}\label{energy_stable}
&\text{ ETD1 }: E^{\epsilon}(u^{k+1},\mathbf{C}^{k+1}) \leq E^{\epsilon}(u^{k},\mathbf{C}^{k}),\\
&\text{ ETDRK2 }: E^{\epsilon}(u^{k+1},\mathbf{C}^{k+1}) \leq E^{\epsilon}(u^{k},\mathbf{C}^{k})+ \tilde{C},
\end{align}
where the constant $\tilde{C}$ is independent of $\Delta t$ and $h$.
\end{theorem}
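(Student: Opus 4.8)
The plan is to exploit the two-block structure of the alternating minimization: a full outer step $\mathbf{C}^k \to u^{k+1} \to \mathbf{C}^{k+1}$ is the composition of the $u$-update \eqref{eqn:iter1} and the $\mathbf{C}$-update \eqref{eqn:iter2}, so I would show that the discrete energy (throughout, $E^\epsilon$ is read as the discrete functional $E_h^\epsilon$ introduced just above Lemma \ref{lemma_1}, built from $H_{\epsilon_1}, F_{\epsilon_1}$) does not increase across either half-step, up to a $\Delta t$- and $h$-independent defect in the ETDRK2 case, and then chain the two inequalities to obtain \eqref{energy_stable}.

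First I would treat the $\mathbf{C}$-half-step, which needs no condition on $S$. Fixing $u=u^{k+1}$, the only $\mathbf{C}$-dependent part of $E_h^\epsilon$ is $\sum_i F_{\epsilon_1}(U_i^{k+1})$, which splits as $\lambda_1\sum_i (C_1-I_i)^2 H_{\epsilon_1}(U_i^{k+1}-\tfrac12) + \lambda_2\sum_i (C_2-I_i)^2\bigl(1-H_{\epsilon_1}(U_i^{k+1}-\tfrac12)\bigr)$, i.e. two decoupled quadratics in $C_1$ and $C_2$ whose leading coefficients are nonnegative because $0\le H_{\epsilon_1}\le 1$. Hence each is convex, its unique stationary point is its global minimizer, and solving the first-order conditions reproduces exactly the formula \eqref{eqn:c}. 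Therefore $\mathbf{C}^{k+1}=\arg\min_{\mathbf{C}} E^\epsilon(u^{k+1},\mathbf{C})$ and $E^\epsilon(u^{k+1},\mathbf{C}^{k+1})\le E^\epsilon(u^{k+1},\mathbf{C}^k)$. A mild non-degeneracy (at least one pixel in each phase, so the denominators in \eqref{eqn:c} are positive) should be recorded here.

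Next I would treat the $u$-half-step. With $\mathbf{C}=\mathbf{C}^k$ frozen, the map $u^k\mapsto u^{k+1}$ is precisely one or more steps of ETD1 \eqref{ETD1} or ETDRK2 \eqref{ETD2} applied to the Allen-Cahn ODE system \eqref{ODE} whose nonlinearity carries the fixed constants $C_1^k,C_2^k$ — exactly the setting of Lemma \ref{lemma_1}. Under $S\ge L/2$, Lemma \ref{lemma_1} gives per time step $E^\epsilon(U^{n+1},\mathbf{C}^k)\le E^\epsilon(U^n,\mathbf{C}^k)$ for ETD1 and $E^\epsilon(U^{n+1},\mathbf{C}^k)\le E^\epsilon(U^n,\mathbf{C}^k)+C(1+\Delta t)^2$ for ETDRK2. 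Composing over the finitely many inner steps that take $u^k$ to the steady state $u^{k+1}$ yields $E^\epsilon(u^{k+1},\mathbf{C}^k)\le E^\epsilon(u^k,\mathbf{C}^k)$ for ETD1 and $E^\epsilon(u^{k+1},\mathbf{C}^k)\le E^\epsilon(u^k,\mathbf{C}^k)+\tilde{C}$ for ETDRK2, where $\tilde{C}$ collects the per-step defects; since $C$ is $\Delta t$- and $h$-independent, $\tilde{C}$ depends only on the scheme parameters. Combining with the $\mathbf{C}$-step inequality gives $E^\epsilon(u^{k+1},\mathbf{C}^{k+1})\le E^\epsilon(u^{k+1},\mathbf{C}^k)\le E^\epsilon(u^k,\mathbf{C}^k)\,(+\,\tilde{C})$, which is \eqref{energy_stable}.

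The routine parts are the $\mathbf{C}$-step (pure convexity) and the bookkeeping of the chain of inequalities. The delicate point is the ETDRK2 defect: one must argue that the accumulated perturbation across the inner iteration stays genuinely independent of $\Delta t$ and $h$ — either by observing that the number of inner steps needed to reach the discrete steady state is controlled independently of the discretization, or by absorbing this count into the definition of $\tilde{C}$ — and one must check that the hypothesis of Lemma \ref{lemma_1} (\emph{for fixed constants} $C_1,C_2$) is met verbatim, which it is because the $u$-subproblem freezes $\mathbf{C}=\mathbf{C}^k$ and only updates it afterwards. A secondary point to verify is that the $E^\epsilon$ in \eqref{energy_stable} is consistently the regularized discrete energy $E_h^\epsilon$, so that Lemma \ref{lemma_1} and the $\mathbf{C}$-minimization refer to one and the same functional.
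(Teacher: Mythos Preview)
Your proposal is correct and follows essentially the same two-block argument as the paper: apply Lemma~\ref{lemma_1} for the $u$-update with $\mathbf{C}$ frozen, then use the minimizing property of \eqref{eqn:c} for the $\mathbf{C}$-update, and chain the two inequalities. Your treatment is in fact more careful than the paper's---you spell out the convexity of the $\mathbf{C}$-subproblem and flag the accumulation of the ETDRK2 defect across inner steps, a point the paper's proof simply elides.
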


\begin{proof}
We give the proof for the iterative process \eqref{eqn:iter1}-\eqref{eqn:iter2} with ETD1. The process with ETDRK2 is the same.

For the first stage \eqref{eqn:iter1}, Lemma \ref{lemma_1} derives
\begin{align*}
E^{\epsilon}(u^{k+1},\mathbf{C}^{k})\leq E^{\epsilon}(u^k,\mathbf{C}^{k}).
\end{align*}
And then the second stage \eqref{eqn:iter2} implies
\begin{align*}
E^{\epsilon}(u^{k+1},\mathbf{C}^{k+1})\leq E^{\epsilon}(u^{k+1},\mathbf{C}^{k}).
\end{align*}
According to the monotonicity of $E^{\epsilon}(u^{k+1},\mathbf{C}^{k+1})$ for $\mathbf{C}^{k+1}$, we obtain the results of Theorem \ref{thm2}.
\end{proof}

\begin{remark}
The energy stability in \eqref{energy_stable} for the ETDRK2 scheme cannot be obtained theoretically. However, numerical experiments in Section \ref{sec:numerical} will show that the energy stability \eqref{energy_stable} holds numerically for both ETD1 and ETDRK2.
\end{remark}

\section{Numerical experiments}\label{sec:numerical}
We divided this section into two parts. The first one is the edge detection by the nonlocal Laplacian operator. The other one is the image segmentation by the phase-field approach of the Chan-Vese model. The direct edge detection is sensitive to noise and may causes some broken edges, while the segmentation by the Allen-Cahn equation can fix these problems to handle more complex images. 

The fractional power kernel for the nonlocal Laplacian operator we choose is as follows,
 \begin{align*}
 \rho_{\delta}(r) = \frac{2(4-\alpha)}{\pi\delta^{4-\alpha}r^{\alpha}}\chi_{(0,\delta]}(r), \quad \alpha\in[0,4).
 \end{align*}
 In this section, the parameter $\alpha$ is taken as 1 for the integrality. Actually, there is no big difference when $\alpha$ changes. And the parameter in Stage 2 of Algorithm 2 is fixed as $\epsilon= 0.1$. The parameter $\epsilon_1$ involved with $H_{\epsilon_1},\delta_{\epsilon_1}$ is given by 0.5 for simplicity. The stabilizer $S$ is set according to the bound given in \eqref{bound_L}. All examples were performed on a Windows desktop system using an Intel Core i5 processor, and programmed in Matlab R2019a.

\subsection{Examples on edge detection}\label{NE_1}
In this subsection, we make the comparison between the nonlocal edge detection method in Section \ref{sec:nonlocal} and some gradient-based edge detection methods  in Section \ref{sec:review}. To quantified the error, some criteria are given to measure the image segmentation. Referring to \cite{Crandall2009_ECE, Liu2010PR}, the definitions of three kinds of ratios are listed as follows:
\begin{align*}
&\text{False positive ratio (FPR)} = \norm{S_1-S_2}_1/\norm{S_1}_1,\\
&\text{False negative ratio (FNR)}= \norm{S_1-S_2}_1/\norm{S_2}_1,\\
&\text{Ratio of segmentation error (RSE)} =  \norm{S_1-S_2}_1 /\left(\norm{S_1}_1 + \norm{S_2}_1\right)
\end{align*}
where $\norm{\cdot}_1$ is the sum of absolute value of each elements for the given matrix. $S_1$ is a synthetic exact image segmentation given by a two-phase image. And $S_2$ is a numerical segmentation result of the corresponding segmentation method.  
It is clearly seen that the smaller those criteria are, the more accurate the corresponding segmentation we have.
The nonlocal interaction radius $\delta$ changes slightly according to different images, and the threshold value $\sigma$ is taken as 0.05.

\begin{example}\label{example1}
We begin with a synthetic color image (size [1024,1024]) to test our nonlocal Laplacian operator. Because of the three channels of a color image, we only need one channel to extract the edge. Without losing generality,  we just exhibit the result of channel 1 in Table \ref{ex21tab} and Figure \ref{ex21}. It is shown that our nonlocal detection method can achieve smaller segmentation error than Roberts, Sobel and Log operators. Our nonlocal detection method is comparable to the Canny operator which is a pretty delicate gradient-based edge detection method. The nonlocal radius $\delta$ here is taken as 3. 

\begin{table}[!t]
\begin{tabular}{ccccccc}
\hline
                 & Roberts       & Sobel    & Log & Canny  &Nonlocal detection    \\ \hline
FPR   &   0.6703      &   0.7430   & 0.7989  &  0.6449 & 0.6952 \\
FNR   &   2.0328 & 2.8917& 2.5709& 1.8159& 1.6759\\
RSE   &   0.5041& 0.5911& 0.6095 & 0.4759& 0.4914\\\hline
CPU time& 5.67E-02& 4.66E-02 &1.53E-01 &1.04E-01& 1.81E-01 \\\hline
\end{tabular}
\caption{\label{ex21tab} Comparison of different edge detection methods in Example \ref{example1}.}
\end{table}

\begin{figure}[t!]\centering
 \vspace{-2cm}
{\subfigcapskip=-65pt
		 \subfigure[The original image]{
			\includegraphics[width=0.3\linewidth, height=0.6\linewidth]{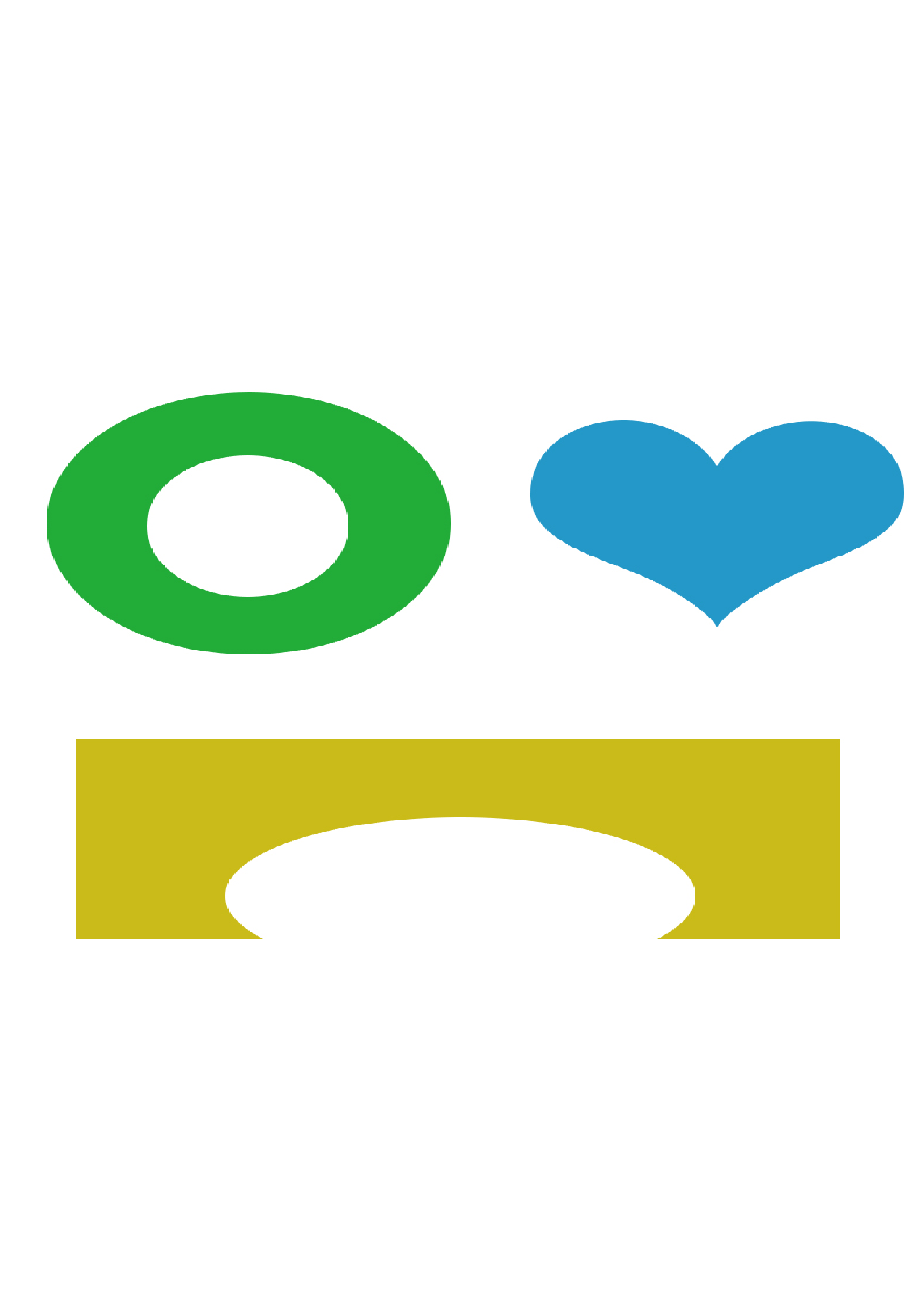}}
}
{
\subfigcapskip=-65pt
       \subfigure[The grayscale image]{
             \includegraphics[width=0.3\linewidth, height=0.6\linewidth]{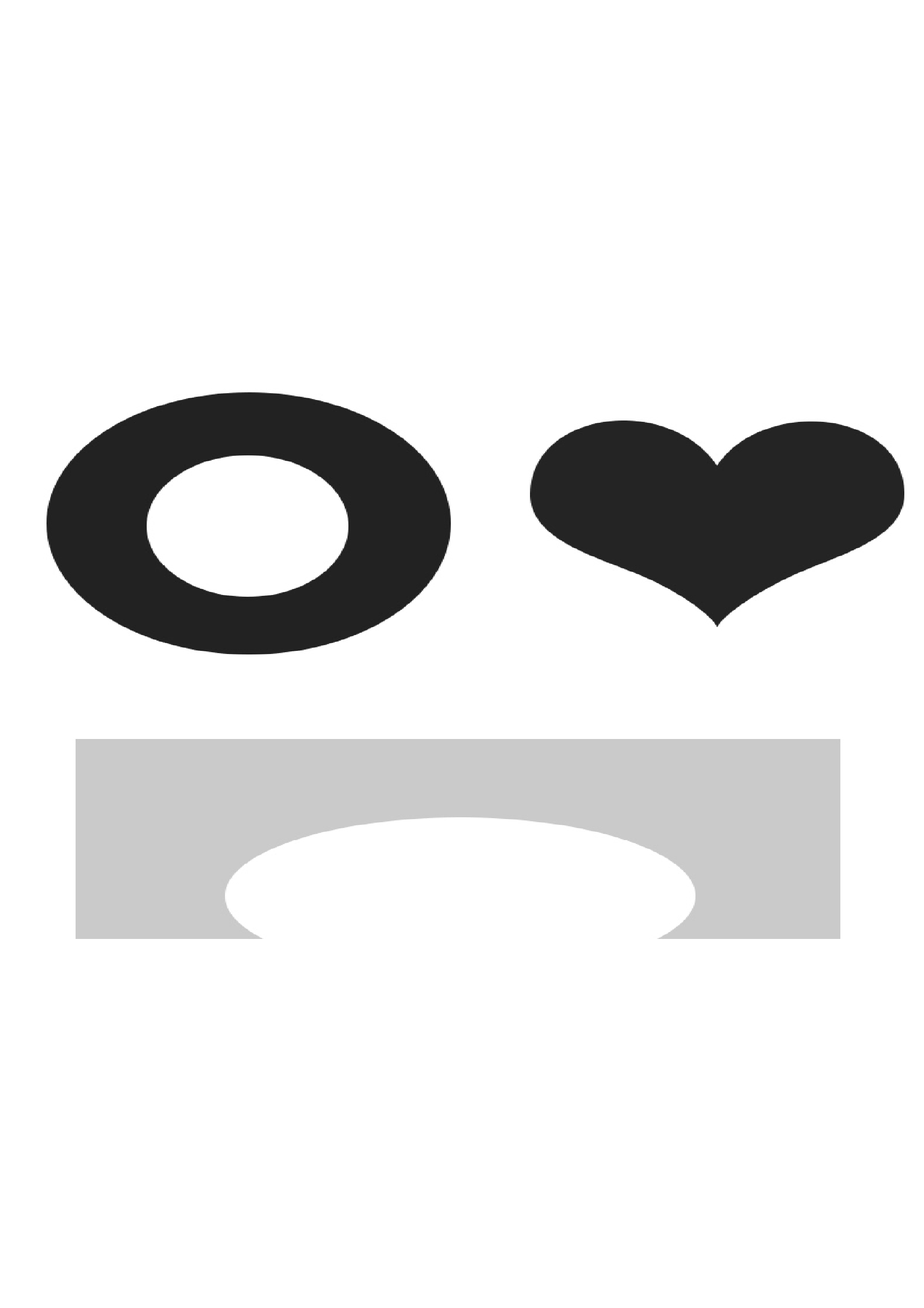}}\\
 }
 \vspace{-1cm}
         \subfigure[Edge detected by the nonlocal operator]{
			\includegraphics[width=0.3\linewidth, height=0.3\linewidth]{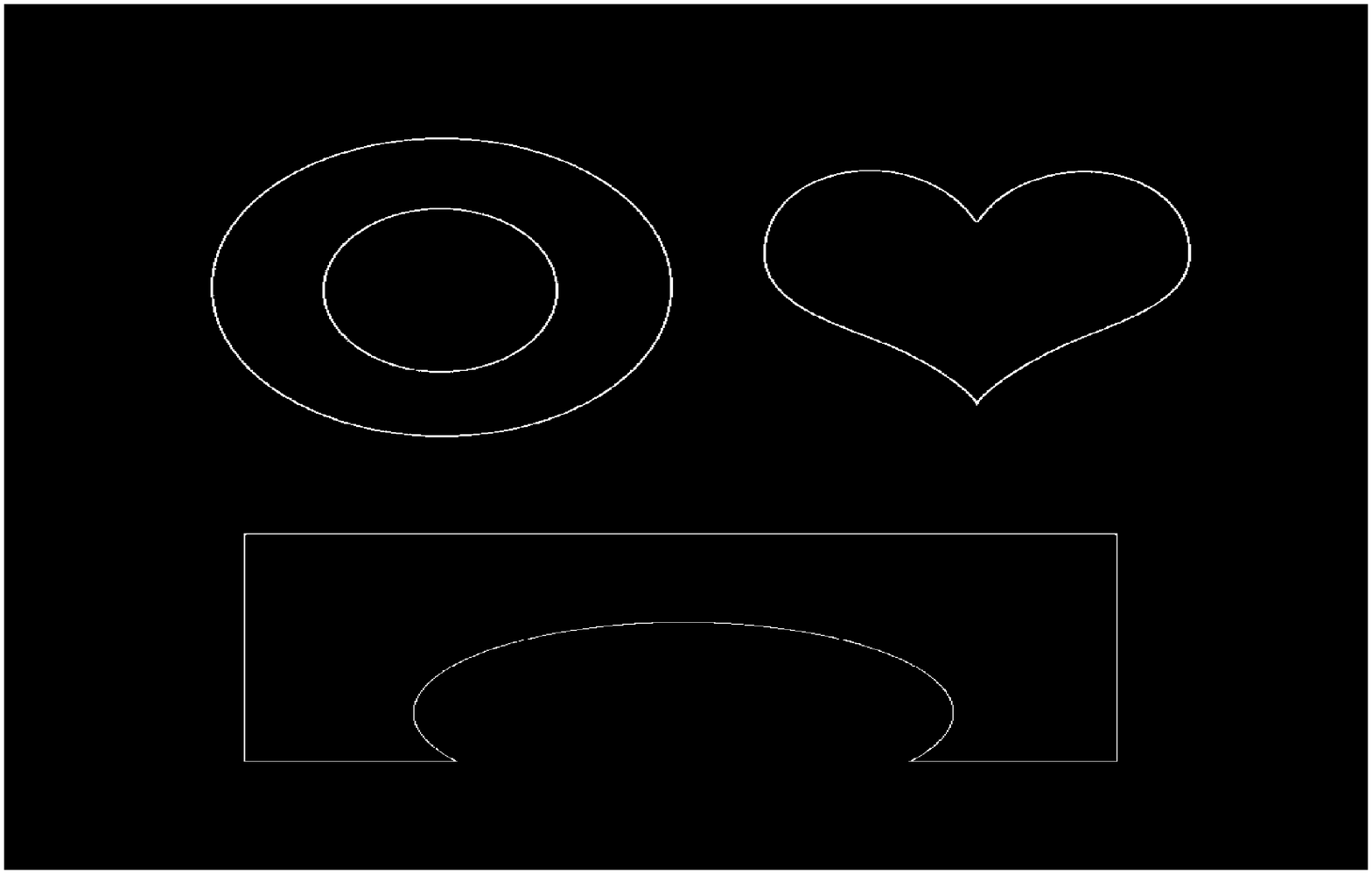}
}
\subfigure[The exact edge]{
			\includegraphics[width=0.3\linewidth, height=0.3\linewidth]{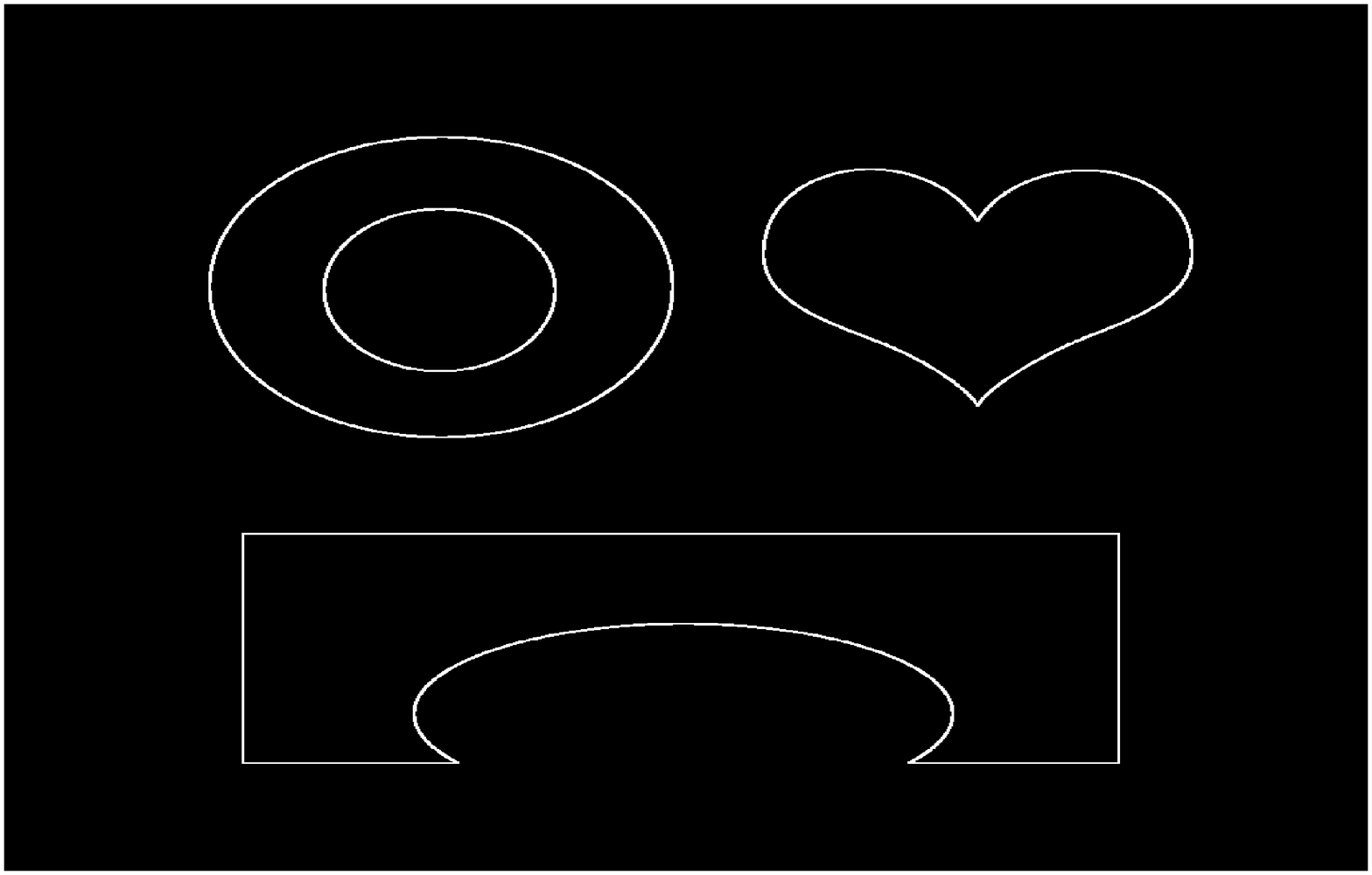}
		}
		
		\caption{\label{ex21} Nonlocal edge detection for channel 1 of a color image in Example \ref{example1}. }
	\end{figure}
\end{example}

\begin{example}\label{example2}
 We choose a realistic image `cameraman'(size [256,256]), which is used frequently to test the edge detection operator. In this example, the result segmented by the Canny operator  is taken as the reference. The segmentation in Figure \ref{figex5} shows that the right hand of the `cameraman' can be recognized by the Canny operator, where our nonlocal operator has some defects.  Whereas, the recognition of the building and the grassland makes our nonlocal edge detection method outperform other detection methods. Here $\delta$ is set to be 4. 

\begin{figure}[!t]\centering
        \subfigure[The original image]{
			\includegraphics[width=0.3\columnwidth]{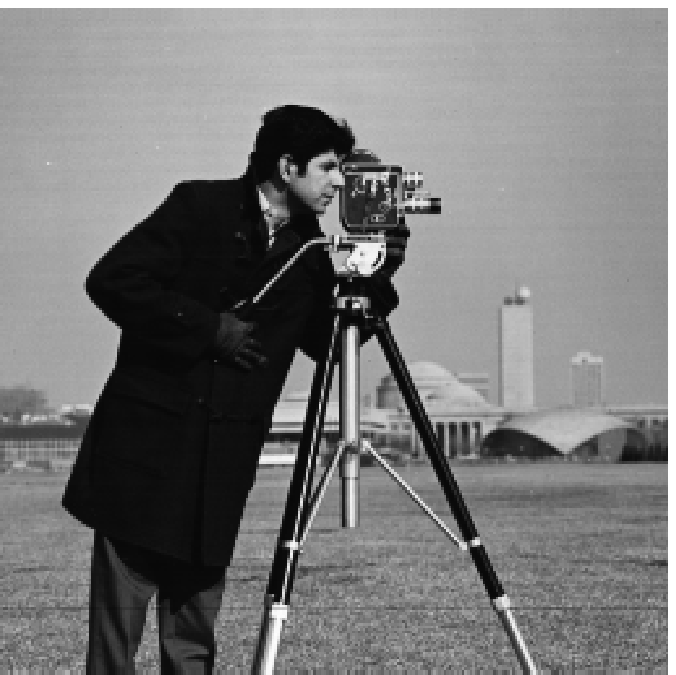}
		}
		\subfigure[The nonlocal detection]{
			\includegraphics[width=0.3\columnwidth]{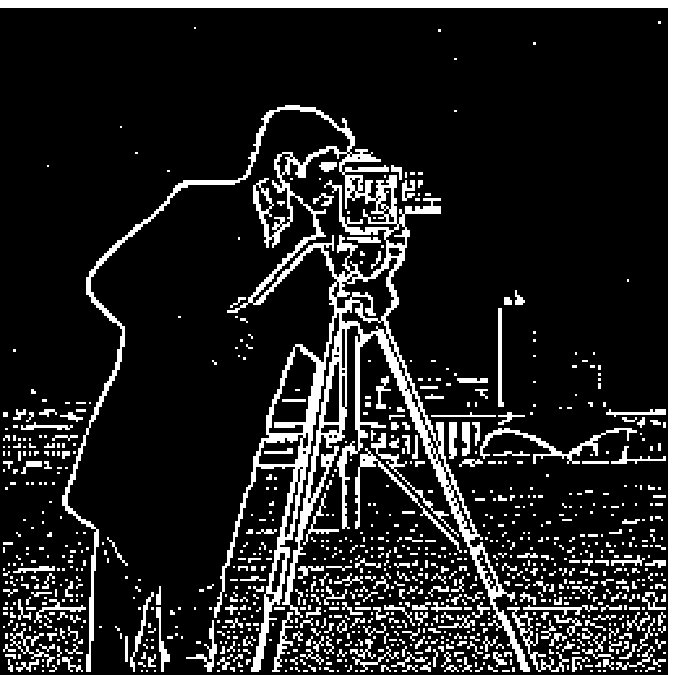}
		}
\subfigure[Roberts]{
			\includegraphics[width=0.3\columnwidth]{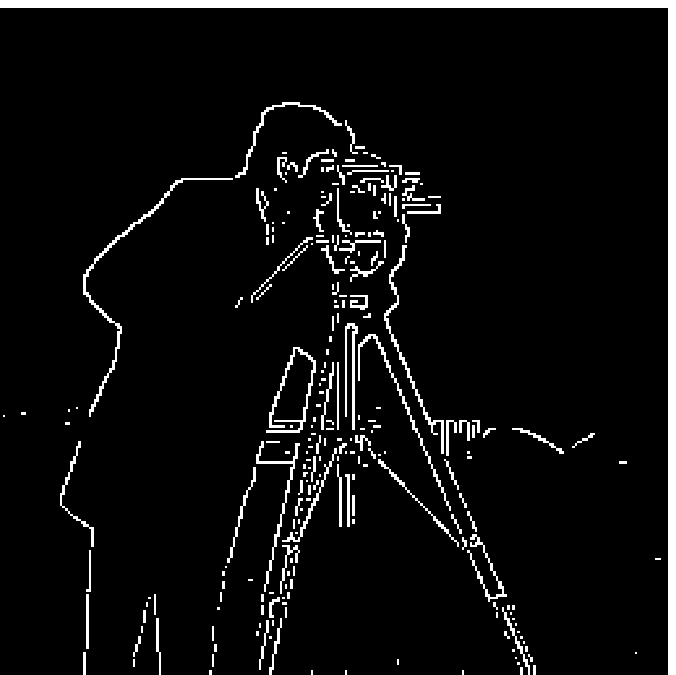}
		}
\subfigure[Sobel]{
			\includegraphics[width=0.3\columnwidth]{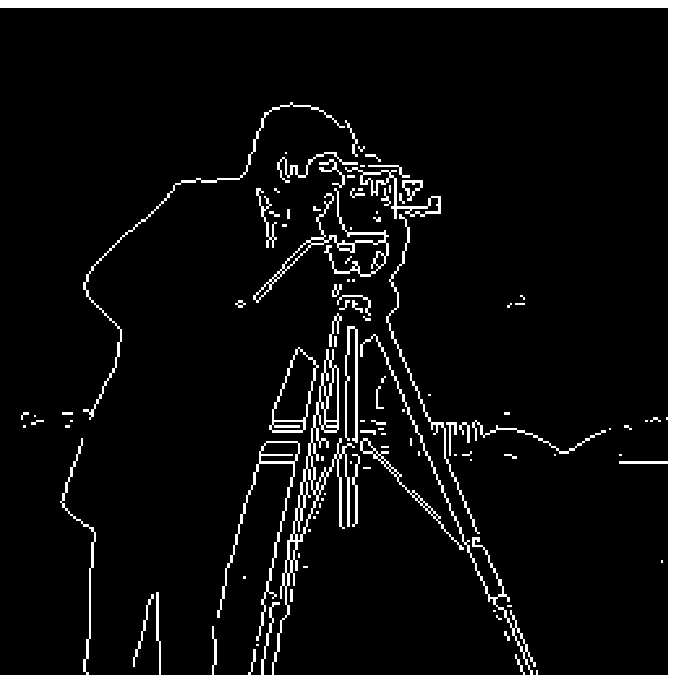}
		}
\subfigure[Log]{
			\includegraphics[width=0.3\columnwidth]{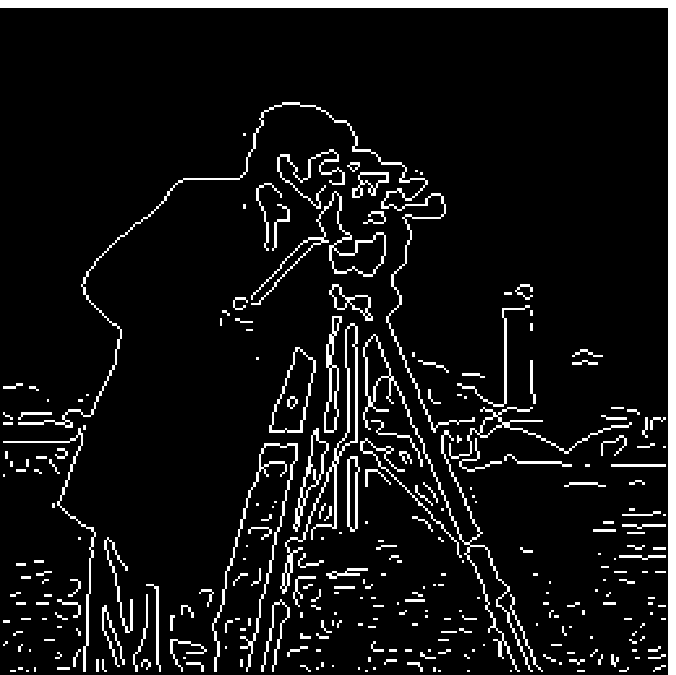}
		}
\subfigure[Canny]{
			\includegraphics[width=0.3\columnwidth]{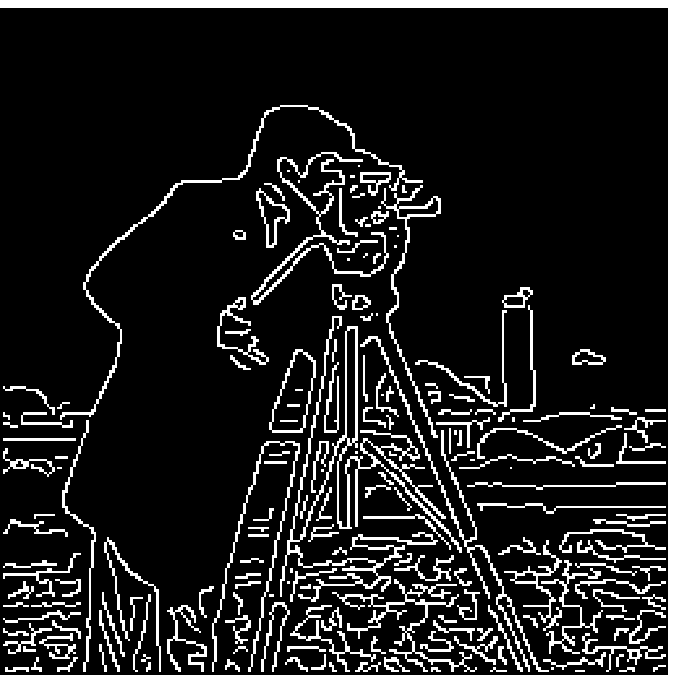}
		}
		\caption{\label{figex5}  Edge detection results for the image `cameraman' in Example \ref{example2}. }
	\end{figure}



\end{example}

\subsection{Examples on image segmentation by the Allen-Cahn equation}

When we solve the Allen-Cahn equation \eqref{MAC_1} to segment the given image, the whole iteration consists of two stages. An initial segmentation is generated in Stage 1 with larger $\epsilon$. Stage 2 with smaller $\epsilon$ specifies the desired edge. We denote $k_1$ as the steps to reach the steady state solution of the Allen-Cahn equation in Stage 1, and $k_i,i=2,\cdots,m$ for each steady state in Stage 2. The number of total loops for updating $C_1,C_2$ is $m$. In the simulation, we record the whole CPU time and verify the discrete maximum bound principle of our methods. 

\begin{example}\label{example_noise}
This example considers the segmentation of a synthetic image (size $[648,1152]$) polluted by Gaussian noise with zero mean and standard derivation $0.2$.  The image with such kind of noise can not be segmented by the nonlocal edge detection method directly. The phase-field approach of the Chan-Vese model is applied to segment this image in Figure \ref{fig:noise}. The red line is our detected edge plotted as the contour $u =\frac{1}{2}$. The corresponding parameters are listed as $\lambda_1 = 1.0 ; \lambda_2 = 1.0 $,  $\epsilon= 5$ for Stage 1, interaction radius $\delta = 5$ and threshold value $\sigma = 0.05$ for the initial nonlocal detection.

 The comparison of two initialization methods, the threshold method and nonlocal detection method, is contained in Table \ref{extab:noise} for first-order (ETD1) and second-order (ETDRK2) schemes, respectively. Compared with the first-order scheme, the second-order scheme can reduce $k_1, k_2$ validly, likewise the number of exterior loop $m$.   Because this image is syntectic, the exact segmentation is available. The error $E = \frac{\norm{U-I_{ex}}_1}{\norm{U}_1}$ between the exact segmentation $I_{ex}$ and the numerical result $U$ for all four cases are similar and quite accurate.  Although the time taken by the nonlocal-detection initialization method is a little longer than the threshold method, we will explain the superiority of the nonlocal-detection initialization method with some real images later. The maximum bound principle is confirmed in the last two column of Table \ref{extab:noise}. In summary, the nonlocal detection method is comparable to the threshold method to generate initial data for solving the Allen-Cahn equation.  The energy stability is shown numerically in Figure \ref{energy_plot}. It can be observed that both schemes are energy non-increasing, which implies that our numerical schemes are energy stable.

 \begin{figure}[!t]\centering
        \subfigure[The original image]{
			\includegraphics[width=0.45\linewidth, height=0.15\linewidth]{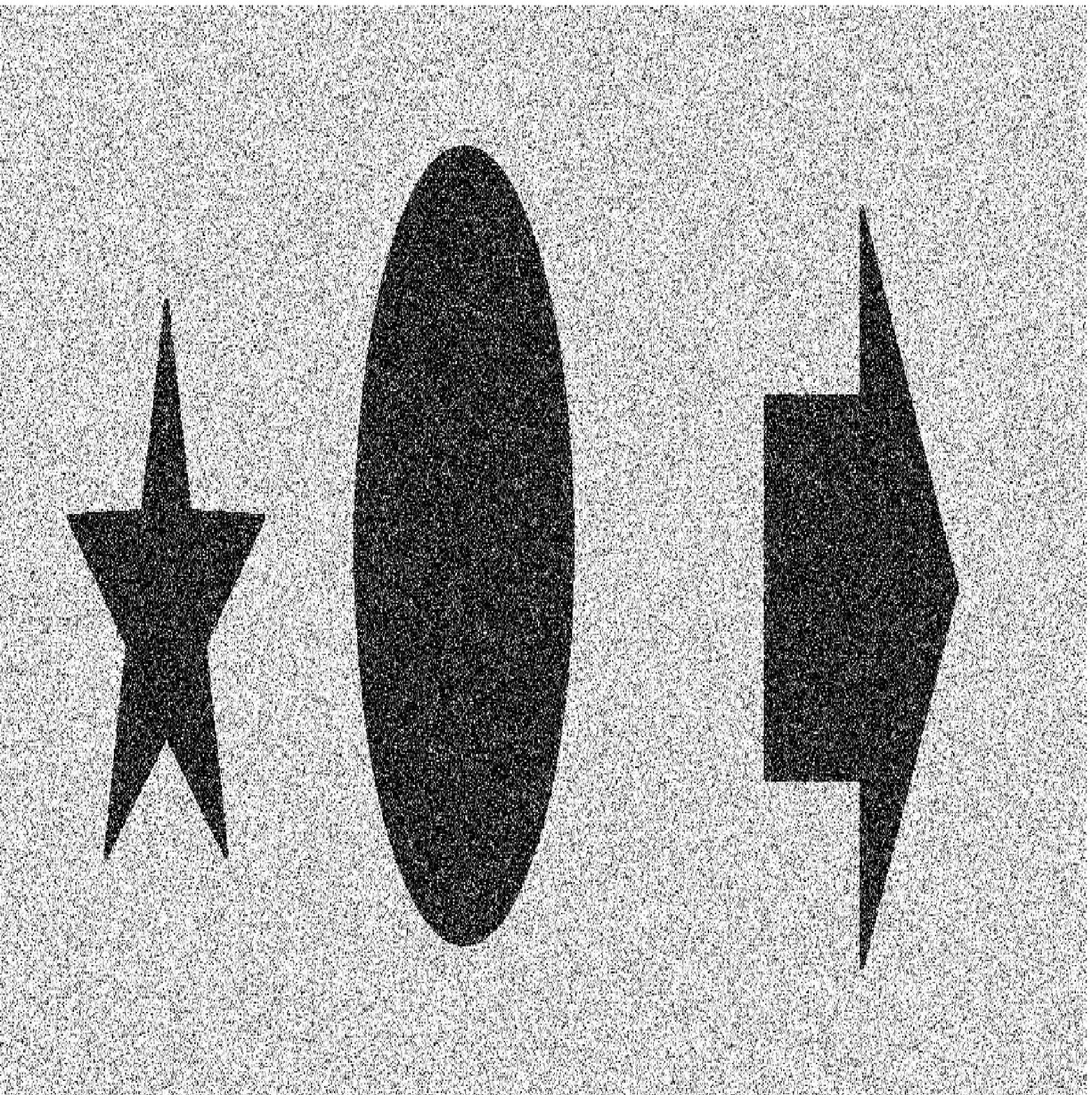}
		}
		\subfigure[The nonlocal edge detection]{
			\includegraphics[width=0.45\linewidth, height=0.15\linewidth]{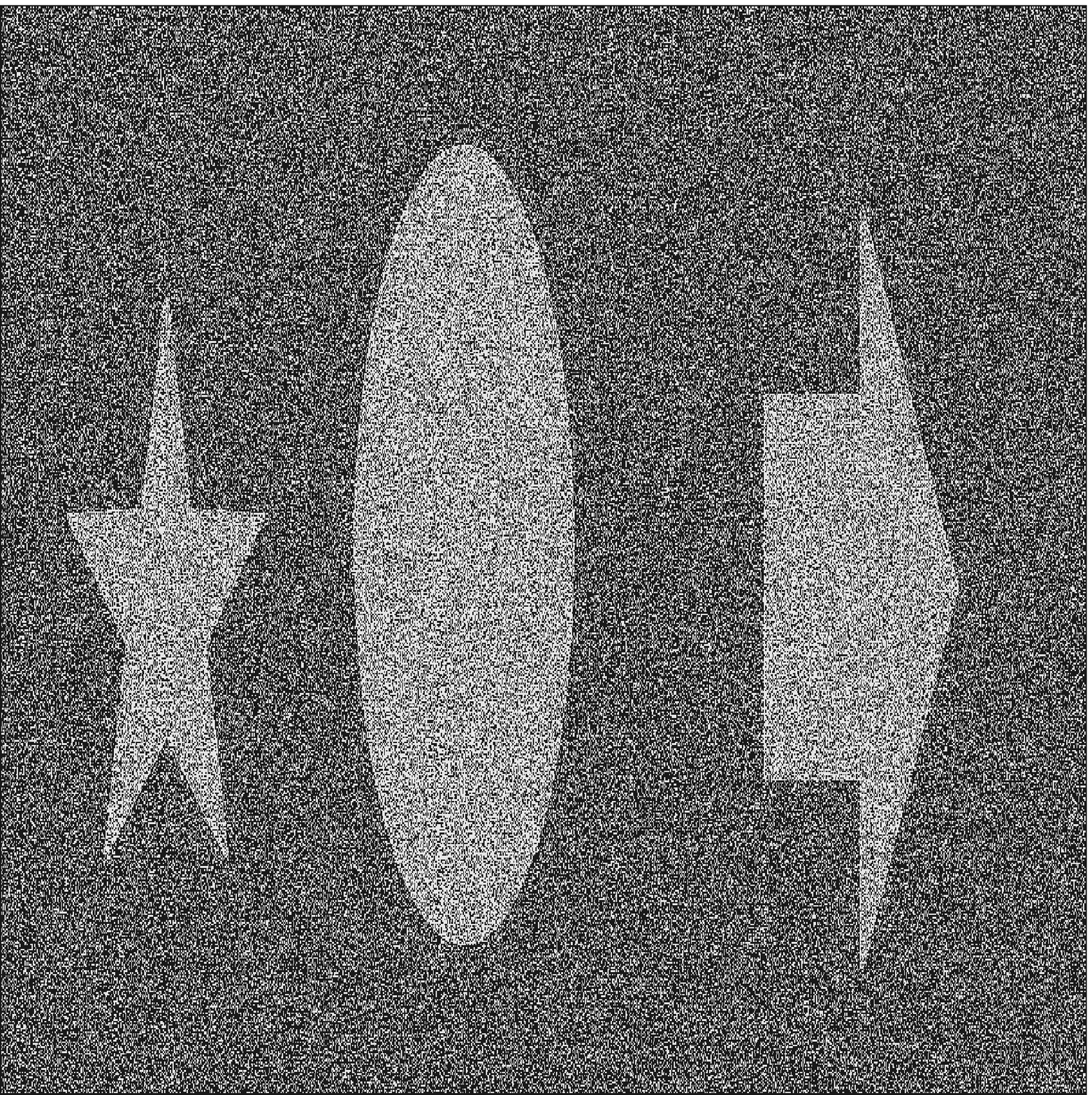}
		}
\subfigure[The initial segmentation of Stage 1]{
			\includegraphics[width=0.45\linewidth, height=0.15\linewidth]{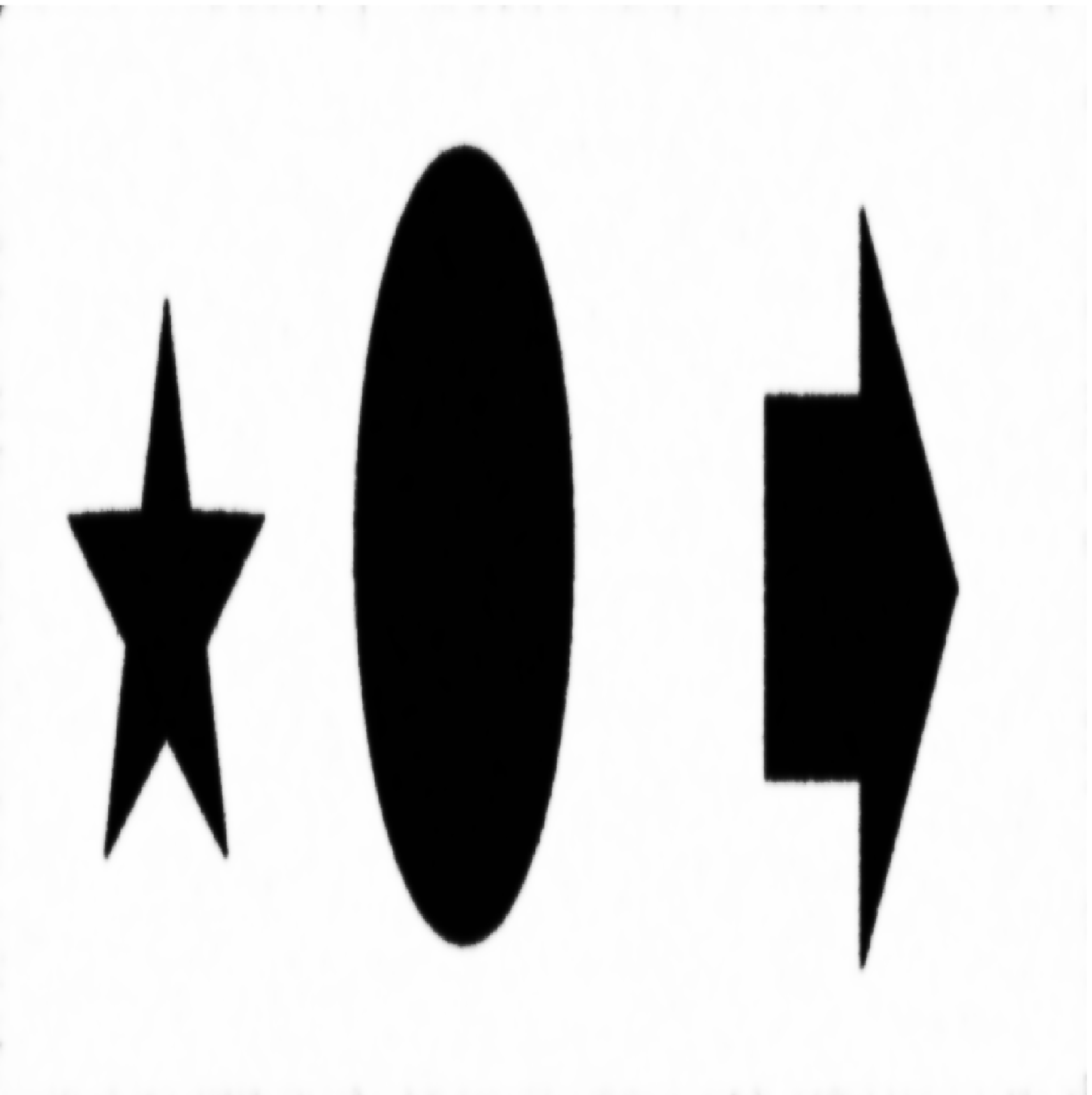}
		}
\subfigure[The final segmentation]{
			\includegraphics[width=0.45\linewidth, height=0.15\linewidth]{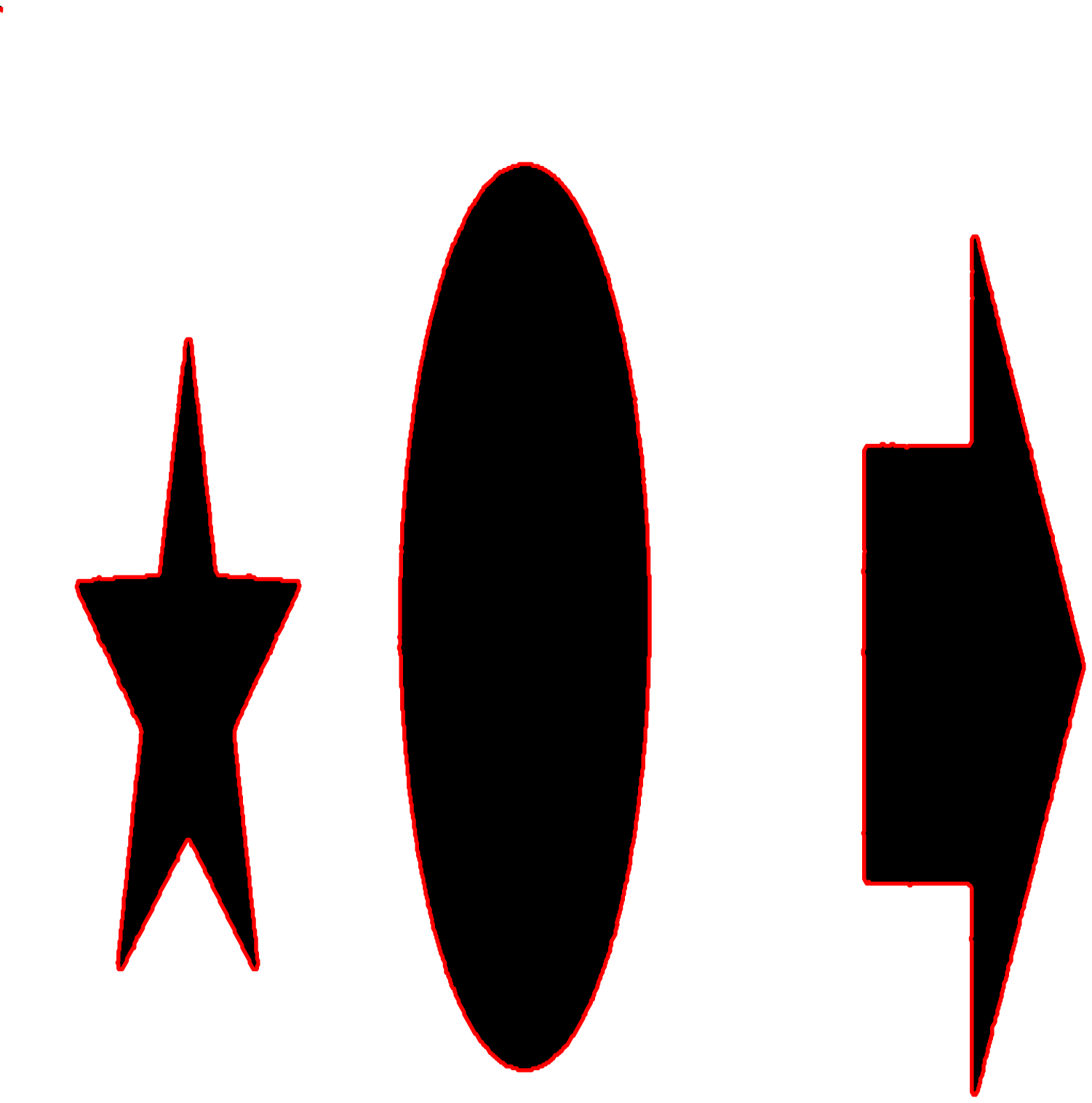}
		}
		\caption{\label{fig:noise} The segmentation of a synthetic image in Example \ref{example_noise}. }
	\end{figure}

\begin{table}[!t]
\resizebox{\textwidth}{!}{
\begin{tabular}{|c|c|c|c|c|c|c|c|c|}
\hline
                 & Stage 1    & \multicolumn{2}{c|}{Stage 2 } & Exterior-loop &  CPU-time & Error & \multicolumn{2}{c|}{Maximum bound}\\ \hline
   Scheme              & $k_1$                  & $k_2$       &$k_i, i>2$          &  $m$  & $T$ & $E$ &min & 1-max\\ \hline
1st-threshold    &  23  &    12      &1         &  6  &  23.1  &5.33E-04 &1.39E-12&1.39E-12\\
2nd-threshold    &  15  &   8         &1         &  3  &  23.8  &5.29E-04 &2.85E-11&2.84E-11 \\
1st-nonlocal    &  36   &  12       &1         &  6 &33.3      &5.67E-04 &8.68E-15&1.52E-14 \\
2nd-nonlocal    &  23   &  8         &1         &  3  & 31.1   & 5.38E-04&1.96E-11&3.10E-11 \\ \hline
\end{tabular}}
\caption{\label{extab:noise} Comparison of four schemes for the syntectic image in  Example \ref{example_noise}.}

\end{table}

\begin{figure}[!t] \centering
		\subfigure[ETD1]{
			\includegraphics[width=0.45\linewidth, height=0.4\linewidth]{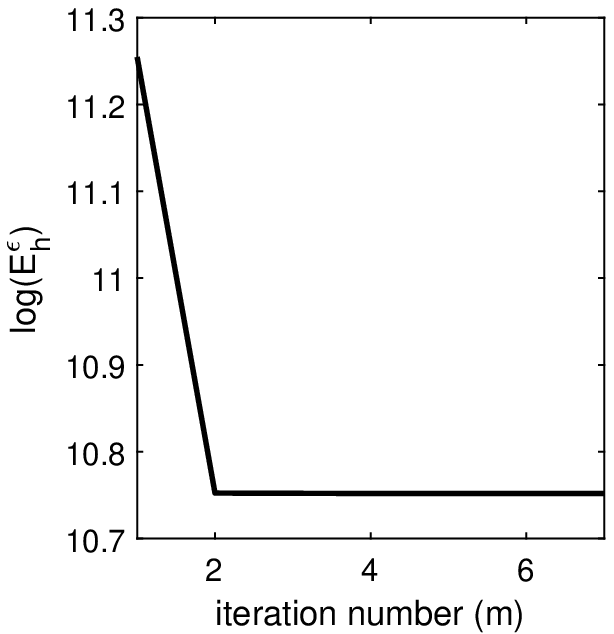}
		}
		\subfigure[ETDRK2 ]{
			\includegraphics[width=0.45\linewidth, height=0.4\linewidth]{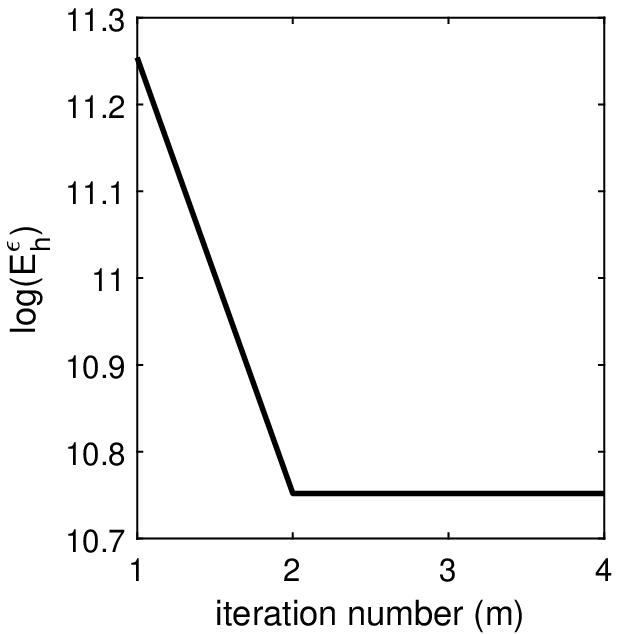}
		}
		\caption{\label{energy_plot} The energy fluctuation of the proposed schemes with the initialization of the nonlocal edge detection method in Example \ref{example_noise}.  }
	\end{figure}

\end{example}

\begin{example}\label{ex:tiger}
This example aims at segmenting the image `tiger in the lake' (size [294,426]). We use two initialization methods to obtain the initial segmentation $u^0$. For the threshold method, we made the threshold value $I_0 = 0.5$. Then in Figure \ref{fig:tiger1}, four plots are listed for solutions from ETD1 and ETDRK2 with two initialization methods, respectively. To underline the segmentation, we use the original image as the background in all four plots. The second-order scheme can avoid the inaccuracy on the neck of the tiger by the first-order scheme. In addition, the nonlocal-detection initialization method can ignore the irrelevant area in the top-right corner without affecting the primary part. And the shape of grass in bottom-right corner is recognized more clearly. As for the most difficult part `the tail', our segmentation is consecutive without any discontinuity. More comparisons of this image `tiger' can be found in \cite{Huang2019_JSC}.  More specifically, some relevant data are stated in Table \ref{tab:tiger_comparison}. The ETDRK2 scheme proceeding with the nonlocal edge detection initialization method can result in better segmentation with less CPU time.  The corresponding parameters are listed as $\lambda_1 = 0.2 ; \lambda_2 = 2.0 $, $\epsilon= 10$ for Stage 1, interaction radius $\delta = 5$ and threshold value $\sigma = 0.05$.
\begin{figure}[t!] \centering
       \subfigure[1st-threshold]{
			\includegraphics[width=0.4\linewidth, height=0.25\linewidth]{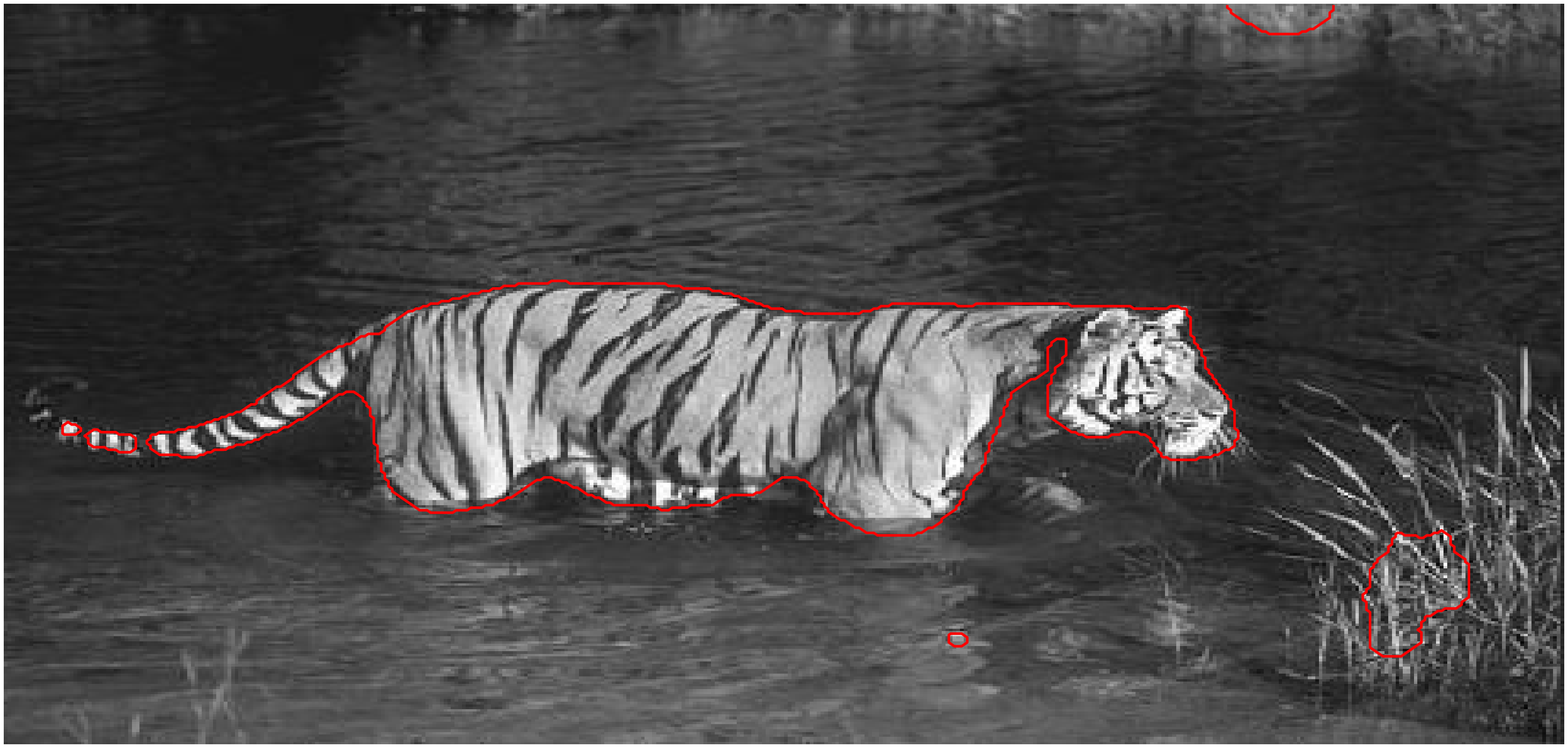}
		         }
        \subfigure[2nd-threshold]{
			\includegraphics[width=0.4\linewidth, height=0.25\linewidth]{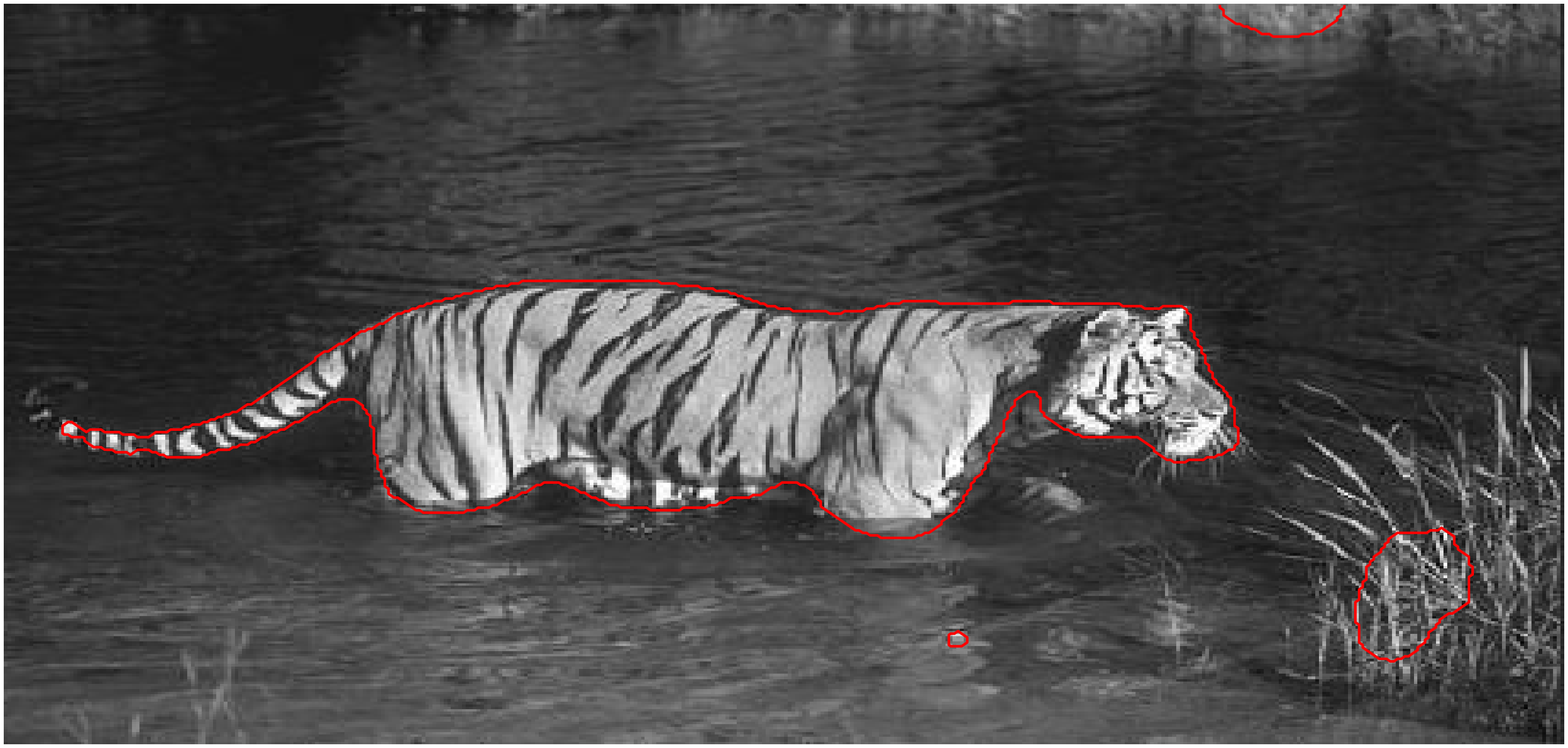}
		}
		\subfigure[1st-nonlocal]{
			\includegraphics[width=0.4\linewidth, height=0.25\linewidth]{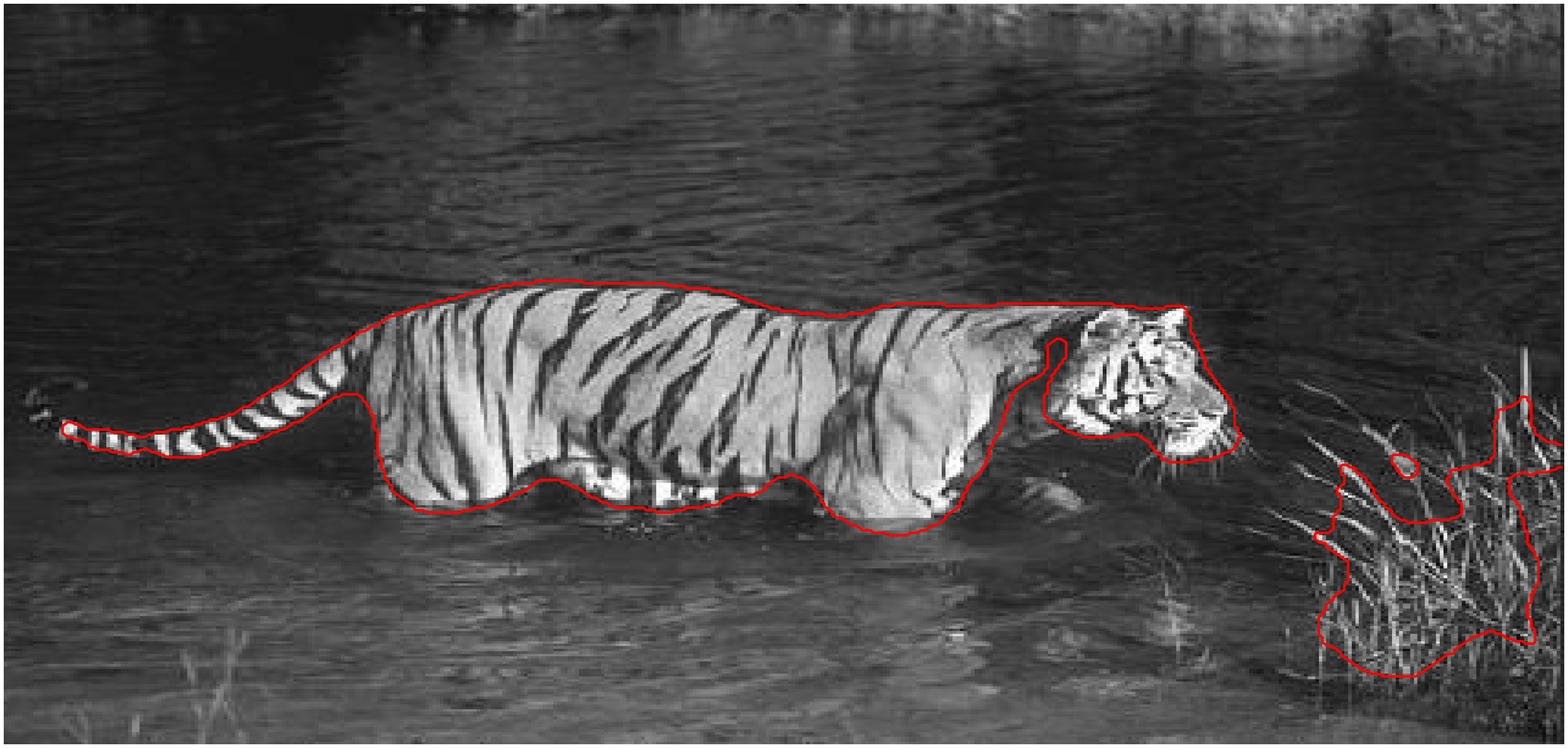}
		}
		\subfigure[2nd-nonlocal]{
			\includegraphics[width=0.4\linewidth, height=0.25\linewidth]{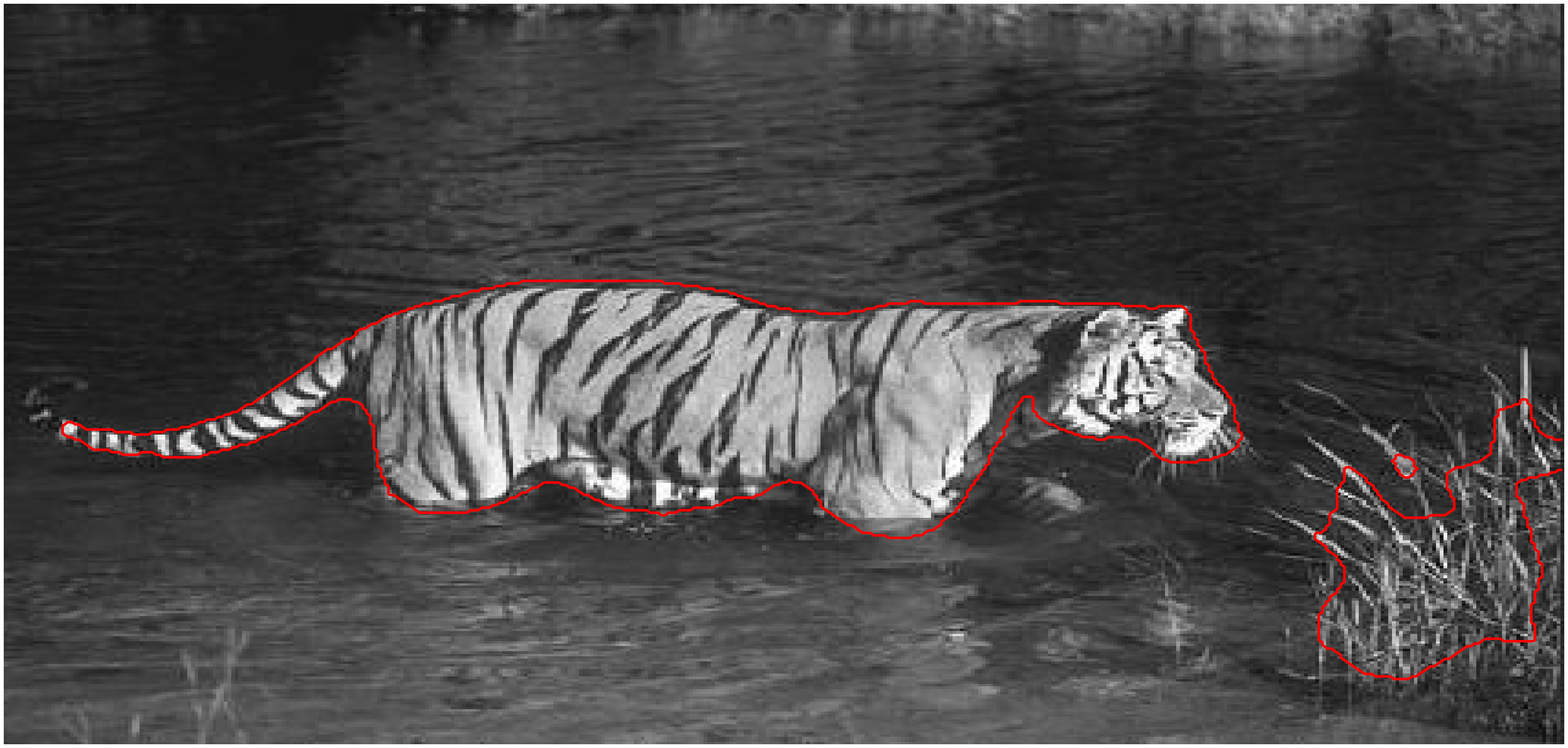}
		}
\caption{\label{fig:tiger1} The segmentation of image `tiger' with two initialization methods in Example \ref{ex:tiger}.}
\end{figure}

\begin{table}[!t]
\resizebox{\textwidth}{!}{
\begin{tabular}{|c|c|c|c|c|c|c|c|}
\hline
                 & Stage 1    & \multicolumn{2}{c|}{Stage 2 } & Exterior-loop &  CPU-time  &\multicolumn{2}{c|}{Maximum bound }\\ \hline
                 Scheme& $k_1$                  & $k_2$       &$k_i, i>2$          &  $m$  &$ T$ &min & 1-max\\ \hline
1st-threshold    &  83  &    16       &1         &  4  &  8.5 &6.07E-14&5.13E-14\\
2nd-threshold    &  65  &   11        &1         &  5  &  10.3   &3.34E-15&4.77E-15 \\
1st-nonlocal     &  83   &  17       &1         &  4 &8.4       &1.53E-14&2.90E-14 \\
2nd-nonlocal     &  64   &  11        &1         &  3  & 9.7   &1.75E-13&3.48E-13\\ \hline
\end{tabular}}
\caption{\label{tab:tiger_comparison} Comparison of four methods for image `tiger' in Example \ref{ex:tiger}.}
\end{table}

\end{example}

\begin{example}\label{ex:mvessel}
This example illustrates the sensitivity of threshold value $I_0$ by image `vessel' (size [344,462]). In Figure \ref{fig:mvessel1}, we use three different threshold values $I_0 = 0.4,0.45,0.5$ to give the initial value $u^0$ of the Allen-Cahn equation at Step 1 of Algorithm 2. We can see that the final segmentation  highly depends on the initialization, i.e., the threshold value $I_0$. When $I_0$ is taken as 0.4, some noise areas are detected, which make the segmentation inaccurate. Conversely, the image is over-smoothing that some small branches of this vessel can not be recognized with $I_0 = 0.5$. Consequently, we narrow down the range of $I_0$ between 0.4 and 0.5, then the final segmentation with $I_0 = 0.45$ is better than the above two. If we shrink the range further, some detailed information should be detected more accurately. However, a bigger $I_0$ will remove the small structures,  while a smaller $I_0$ will show more details but have irrelevant noises. That is, the threshold initial value always exists the less-smoothing  or over-smoothing issue. By contrast, the nonlocal edge detection can capture the detailed texture of the given image. By adjusting the parameters $\delta$ and $\sigma$, more information of the image `vessel' can be detected as an initial value, from which we can get a more delicate segmentation. In Figure \ref{fig:mvessel2}, the nonlocal edge detection method is applied to the initialization, which led to a more specific segmentation.  In Table \ref{tab:mvessel_comparison}, we give the performance comparison of ETD1 and ETDRK2 schemes with threshold initialization and nonlocal edge detection initialization. It is noticed that the ETDRK2 scheme initialized by the nonlocal detection is more efficient than other methods. More comparisons of this image `vessel' can be also found in \cite{Huang2019_JSC}. The remaining parameters are $\lambda_1 = 1 ; \lambda_2 = 7.0 $, $\epsilon= 5 $ for stage 1 , interaction radius $\delta = 5$ and threshold value $\sigma  = 0.01$.
\begin{figure}[!t]
		\subfigure[The original image]{
			\includegraphics[width=0.4\linewidth, height=0.3\linewidth]{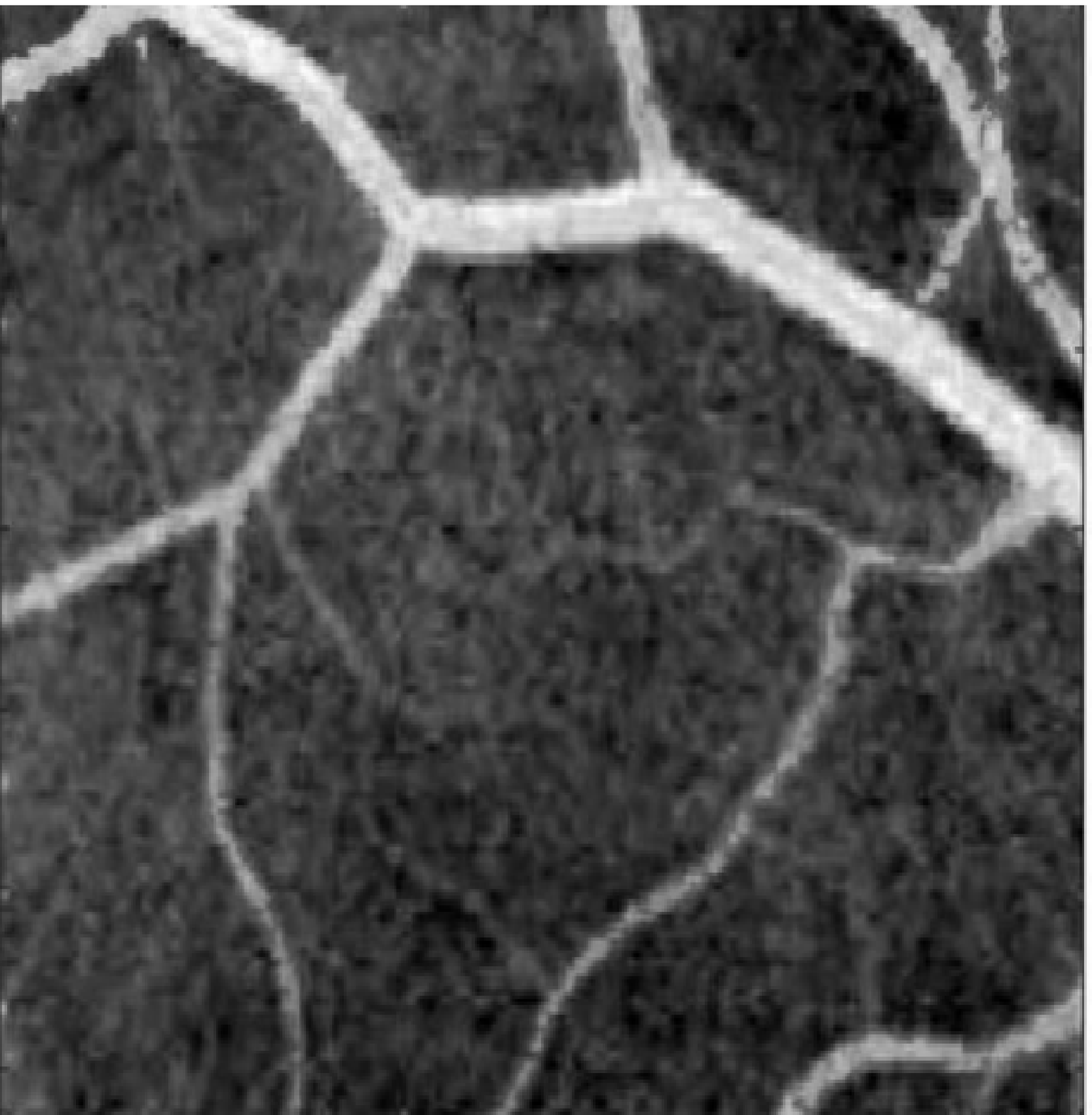}
		} \quad\quad
		\subfigure[Threshold value $I_0=0.4$]{
			\includegraphics[width=0.4\linewidth, height=0.3\linewidth]{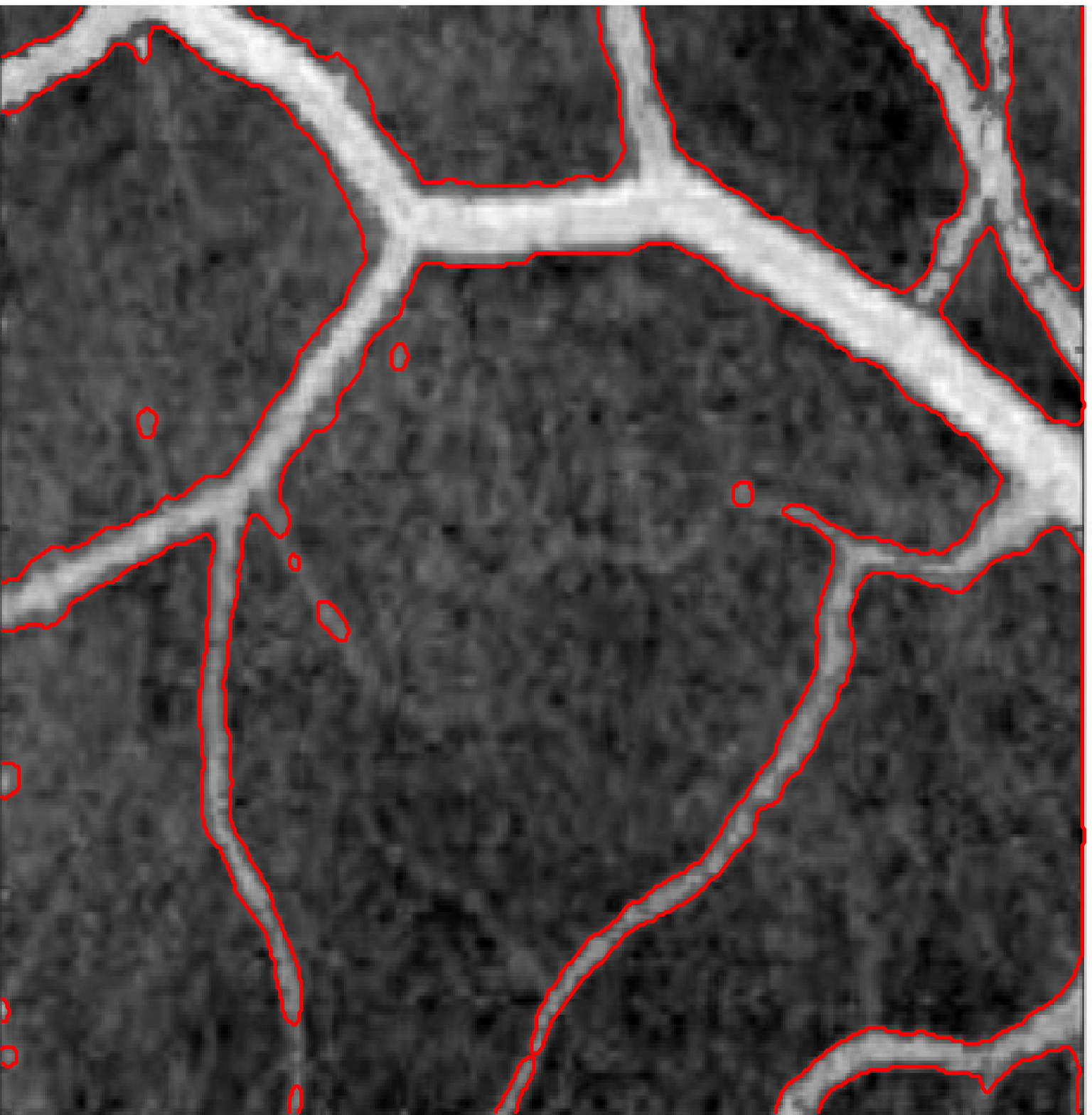}
		}\\
		\subfigure[Threshold value $I_0=0.45$]{
			\includegraphics[width=0.4\linewidth, height=0.3\linewidth]{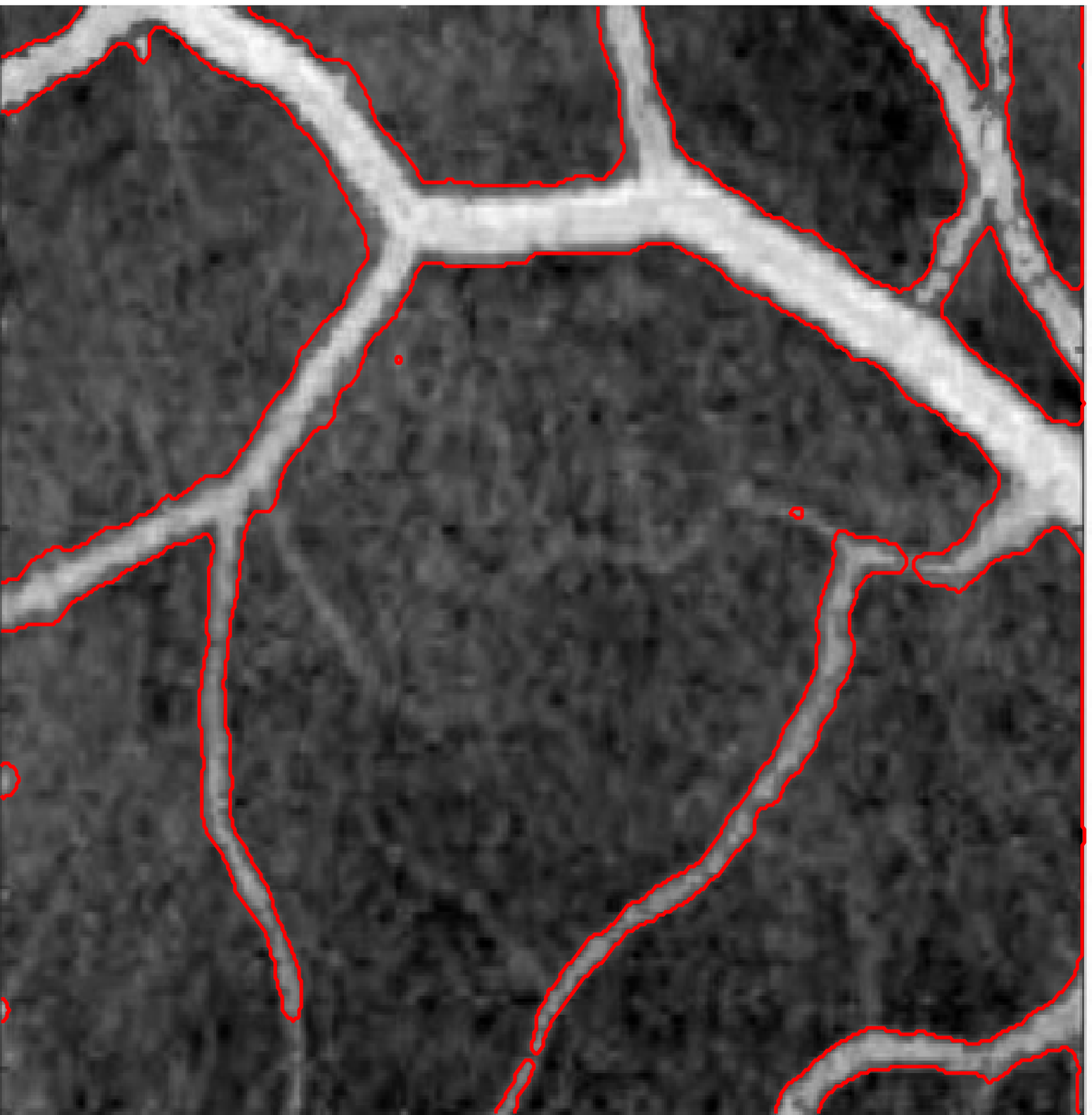}
		} \quad\quad
         \subfigure[Threshold value $I_0=0.5$]{
			\includegraphics[width=0.4\linewidth, height=0.3\linewidth]{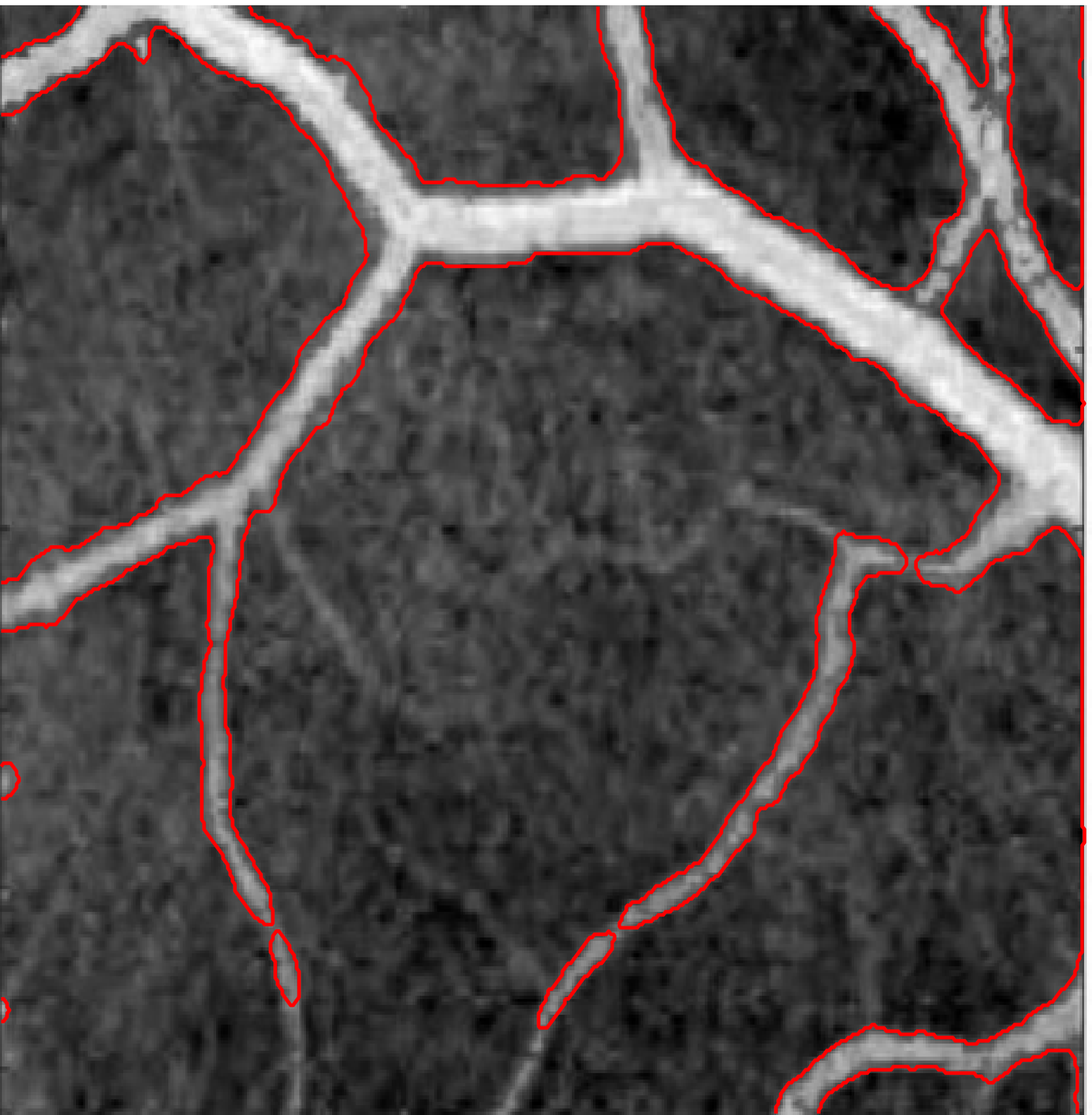}
		}
		\caption{\label{fig:mvessel1} The effect of threshold value $I_0$ on `vessel' segmentation in Example \ref{ex:mvessel}.}
	\end{figure}

\begin{figure}[!t]
		\subfigure[The original image]{
			\includegraphics[width=0.4\linewidth, height=0.3\linewidth]{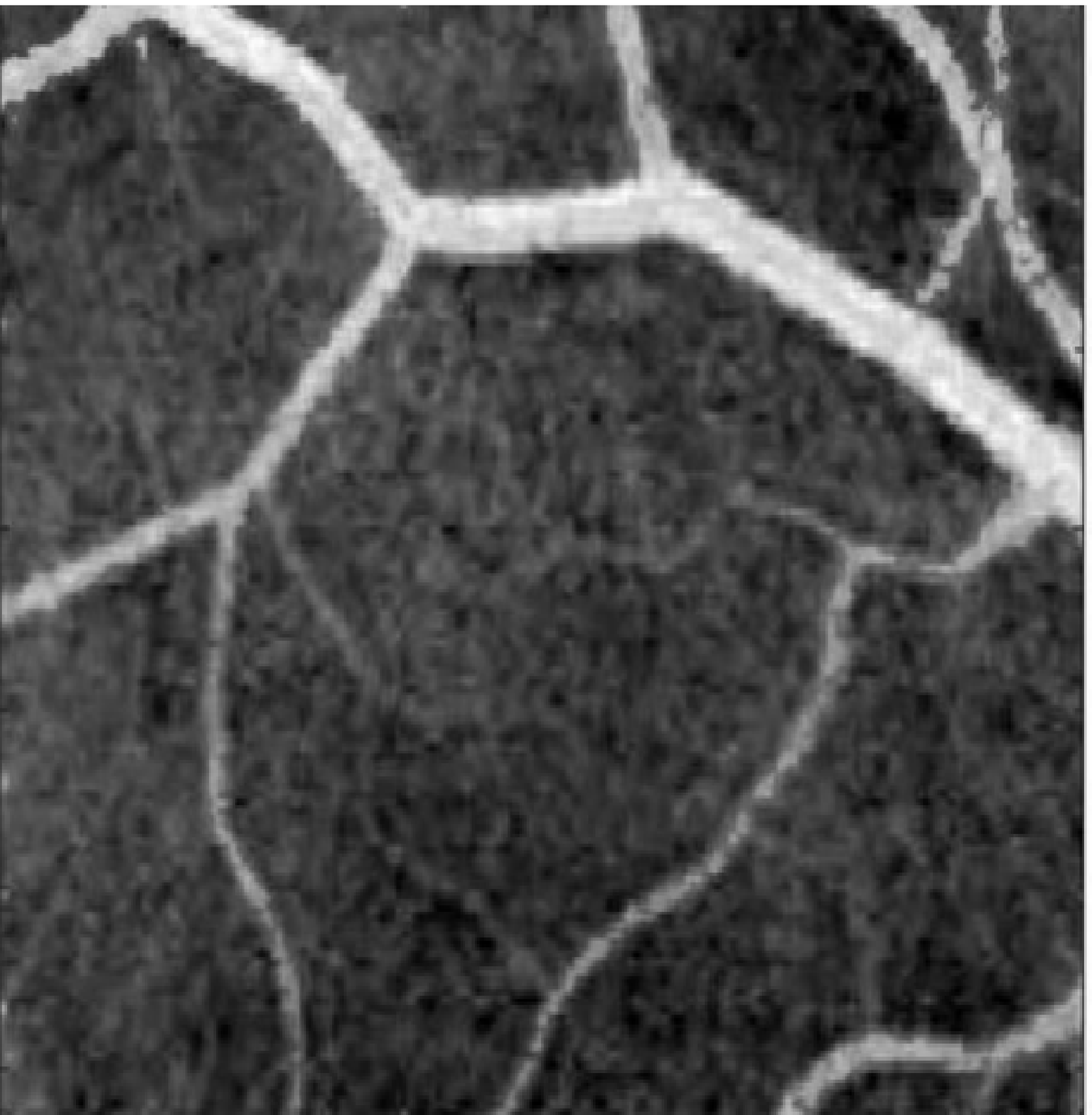}
		} \quad\quad
		\subfigure[The nonlocal edge detection]{
			\includegraphics[width=0.4\linewidth, height=0.3\linewidth]{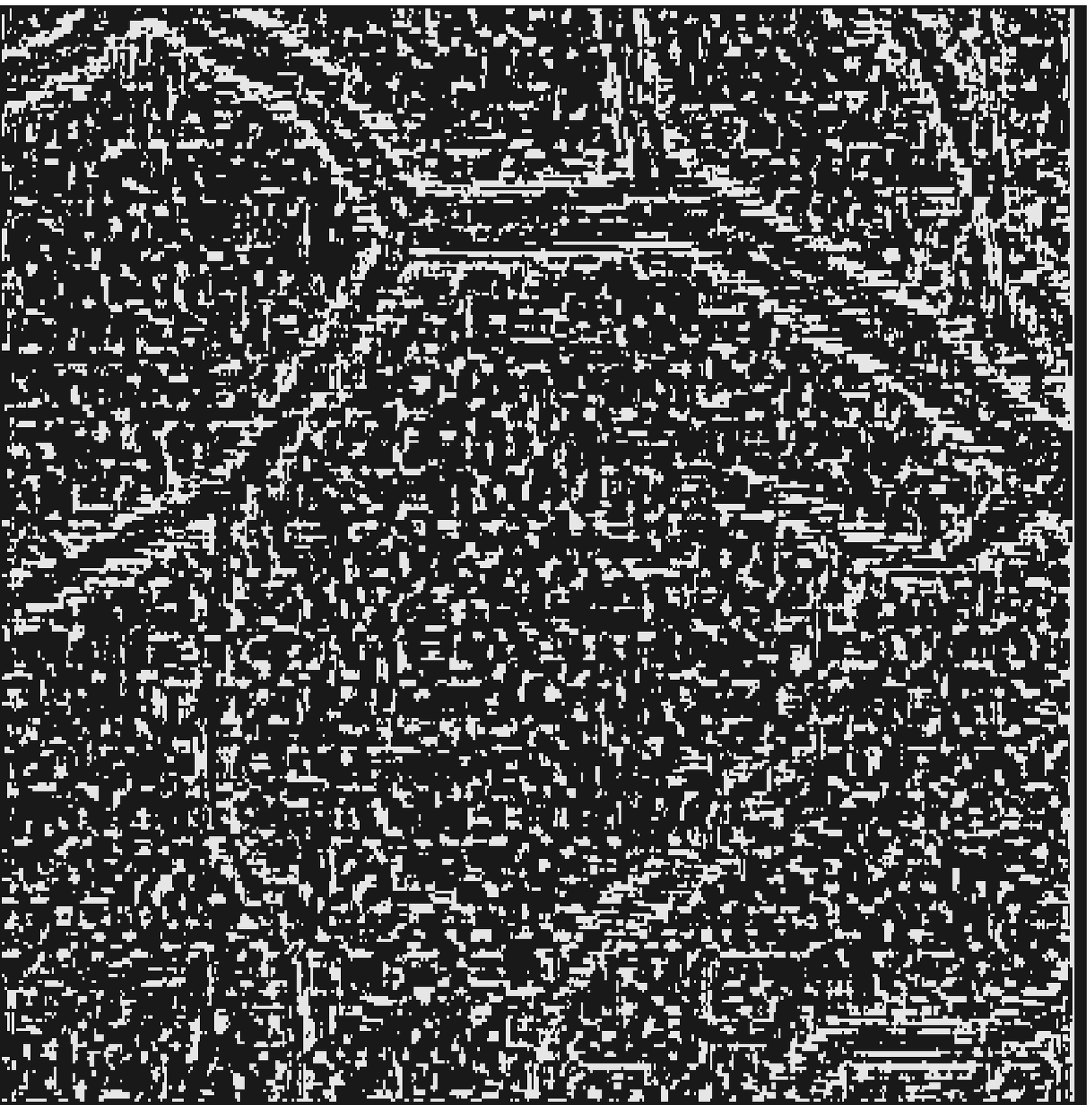}
		}\\
		\subfigure[The result of Stage 1]{
			\includegraphics[width=0.4\linewidth, height=0.3\linewidth]{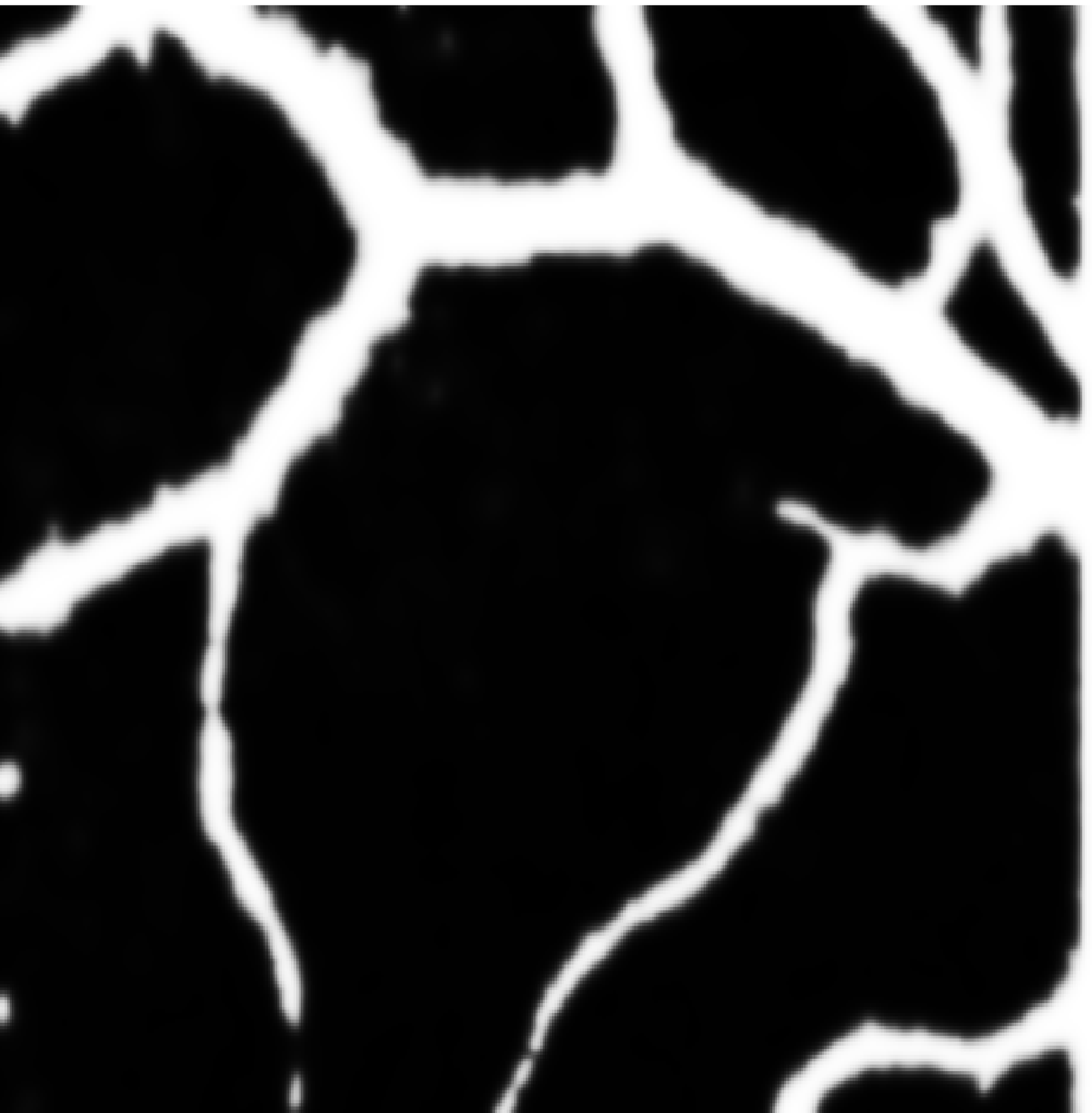}
		}\quad\quad
         \subfigure[The final segmentation]{
			\includegraphics[width=0.4\linewidth, height=0.3\linewidth]{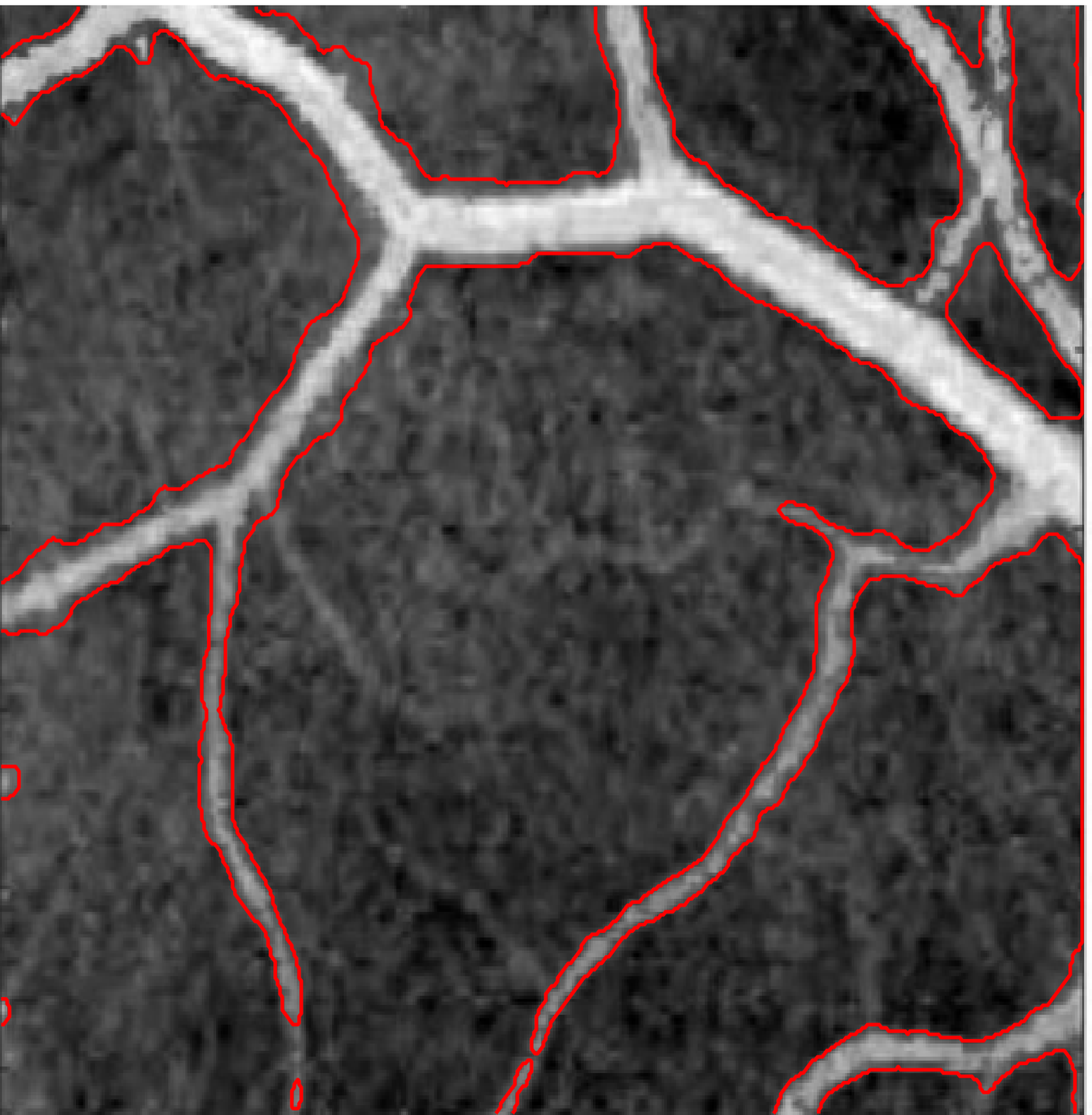}
		}
	\caption{\label{fig:mvessel2} The segmentation of image `vessel' with the initialization of nonlocal edge detection in Example \ref{ex:mvessel}. }
	\end{figure}

\begin{table}[!t]
\resizebox{\textwidth}{!}{
\begin{tabular}{|c|c|c|c|c|c|c|c|}
\hline
                 & Stage 1    & \multicolumn{2}{c|}{Stage 2 } & Exterior-loop &  CPU-time  &\multicolumn{2}{c|}{Maximum bound }\\ \hline
   Scheme              & $k_1$                  & $k_2$       &$k_i, i>2$          &  $m$  & $T$ &min & 1-max\\ \hline
1st-threshold    &  127  &    22       &1         &  13  &  24.0 &7.81E-14&2.95E-14\\
2nd-threshold    &  93 &   13        &1         &  9  &  26.0   &1.60E-14&6.67E-15 \\
1st-nonlocal     &  145   &  20       &1         &  11 &26.7       &1.55E-12&6.09E-13 \\
2nd-nonlocal     &  91   &  13       &1         &  4  & 23.3   &1.28E-11&4.70E-12\\ \hline
\end{tabular}}
\caption{\label{tab:mvessel_comparison} Comparison of four methods for image `vessel' in Example \ref{ex:mvessel}.}
\end{table}

\end{example}

\section{Conclusion}\label{conclusion}
In this paper, we first propose a novel nonlocal edge detection method by using the nonlocal Laplacian operator. In practice, the detection results of this method are similar even superior to some widely used gradient-based edge detection methods. Then to get a better two-phase segmentation of grayscale images, we adopt a phase-field approach of the Chan-Vese model. An alternating minimization algorithm is developed to solve this model. For the derived Allen-Cahn equation, it is solved by exponential time integration and finite difference discretization in space. The nonlocal edge detection gives the initial value for solving the Allen-Cahn equation at the first step. For the developed ETD1 and ETDRK2 schemes, the discrete maximum bound principle and energy stability have been proved theoretically and verified numerically. A variety of numerical experiments have been given to demonstrate the effectiveness of our proposed methods.  It is notable that the initial value provided by the nonlocal edge detection method can generate better segmentations than those from the traditional threshold initialization method. And we could also observe that the ETDRK2 method with the nonlocal edge detection initialization usually gives better results with less CPU time in the simulation. One future direction of this research is to study the segmentation of images with intensity inhomogeneity. Multi-phase image segmentation and segmentation of color images are also of our interests.

\section*{Acknowledgments}
Z. Qiao's work is partially supported by the Hong Kong Research Council RFS grant RFS2021-5S03
and GRF grants 15300417 and 15302919. Q. Zhang's research is supported by the 2019 Hong Kong Scholar Program G-YZ2Y.

\end{document}